\newcommand\reallywidehat[1]{\savestack{\tmpbox}{\stretchto{  \scaleto{    \scalerel*[\widthof{\ensuremath{#1}}]{\kern-.6pt\bigwedge\kern-.6pt}    {\rule[-\textheight/2]{1ex}{\textheight}}  }{\textheight}}{0.5ex}}\stackon[1pt]{#1}{\tmpbox}}
\newtheorem{theorem}{Theorem}
\newtheorem{definition}[theorem]{Definition}
\newtheorem{lemma}[theorem]{Lemma}
\newtheorem{notation}[theorem]{Notation}
\newtheorem{proposition}[theorem]{Proposition}
\newtheorem{remark}[theorem]{Remark}
\newenvironment{proof}[1][Proof]{\noindent\textbf{#1.} }{\ \rule{0.5em}{0.5em}}
\begin{document}

\title{Integration of geometric rough paths}
\author{Danyu Yang}
\date{}
\maketitle

\begin{abstract}
We build a connection between rough path theory and noncommutative algebra,
and interpret the integration of geometric rough paths as an example of a
non-abelian Young integration. We identify a class of slowly-varying
one-forms, and prove that the class is stable under
basic operations. In particular rough path theory is extended to allow a
natural class of time varying integrands.
\end{abstract}

\bigskip

Consider two topological groups $G_{1}$ and $G_{2}$, and a differentiable
function $f:G_{1}\rightarrow G_{2}$. For a time interval $\left[ S,T\right] $
and a differentiable path $X:\left[ S,T\right] \rightarrow G_{1}$, the
integration of the exact one-form $df$ along $X$ can be defined as:%
\begin{equation*}
\int_{r=S}^{T}dfdX_{r}=\int_{X_{S}}^{X_{T}}df=f\left( X_{S}\right)
^{-1}\!f\left( X_{T}\right) \in G_{2}\text{.}
\end{equation*}%
When $f$ and $X$ are only continuous, $\int_{r=S}^{T}dfdX_{r}$ can be
defined as $f\left( X_{S}\right) ^{-1}\!f\left( X_{T}\right) $.

Consider a time-varying exact one-form $\left( df_{t}\right) _{t}$ with $%
f_{t}:G_{1}\rightarrow G_{2}$ indexed by $t\in \left[ 0,1\right] $, and $X:%
\left[ 0,1\right] \rightarrow G_{1}$. If the following limit exists in $%
G_{2} $:%
\begin{equation*}
\lim_{\left\vert D\right\vert \rightarrow 0,D\subset \left[ 0,1\right]
}\int_{r=t_{0}}^{t_{1}}df_{t_{0}}dX_{r}%
\int_{r=t_{1}}^{t_{2}}df_{t_{1}}dX_{r}\cdots
\int_{r=t_{n-1}}^{t_{n}}df_{t_{n-1}}dX_{r}
\end{equation*}%
where $D=\left\{ t_{k}\right\} _{k=0}^{n},0=t_{0}<\cdots <t_{n}<1,n\geq 1$
with $\left\vert D\right\vert :=\max_{k=0}^{n-1}\left\vert
t_{k+1}-t_{k}\right\vert $, then the integral $\int_{r=0}^{1}df_{r}dX_{r}$
is defined to be the limit.

We reinterpret the integration of Lipschitz one-forms along geometric rough
paths developed by Lyons \cite{lyons1998differential} as an integration of
time-varying exact one-forms along group-valued paths. The interpretation is
in the language of the Malvenuto--Reutenauer Hopf algebra of permutations
introduced in \cite{malvenuto1994produits, malvenuto1995duality}. \newpage

\section{Background: rough path theory}

In \cite{young1936inequality} Young proved that, for $x:\left[ 0,1\right]
\rightarrow
\mathbb{C}
$ of finite $p$-variation and $y:\left[ 0,1\right] \rightarrow
\mathbb{C}
$ of finite $q$-variation, $p\geq 1$, $q\geq 1$, $p^{-1}+q^{-1}>1$, the
Stieltjes integral
\begin{equation*}
\int_{t=0}^{1}x_{t}dy_{t}
\end{equation*}%
is well defined\footnote{%
at least when $x$ and $y$ have no common jumps.} as the limit of Riemann
sums. The definition of $p$-variation goes back to Wiener \cite%
{wiener1924quadratic}:%
\begin{equation*}
\left\Vert x\right\Vert _{p-var,\left[ 0,1\right] }:=\sup_{D\subset \left[
0,1\right] }\left( \sum_{k,t_{k}\in D}\left\Vert
x_{t_{k+1}}-x_{t_{k}}\right\Vert ^{p}\right) ^{\frac{1}{p}}
\end{equation*}%
where the supremum is over all finite partitions $D=\left\{ t_{k}\right\}
_{k=0}^{n},0=t_{0}<\cdots <t_{n}=1,n\geq 1$. The condition given by Young is
sharp: the Riemann-Stieltjes integral $\int xdy$ does not necessarily exist
when $p^{-1}+q^{-1}=1$ \cite{young1936inequality}. In \cite{lyons1991non},
Lyons demonstrated that similar obstacle exists in stochastic integration,
and one needs to consider $x$ and $y$ as a \textquotedblleft
pair\textquotedblright\ \textquotedblleft in a fairly strong
way\textquotedblright .

From a different perspective, Chen \cite{chen1954iterated,
chen1957integration, chen1958integration} investigated the iterated
integration of one-forms and developed a theory of cohomology for loop
spaces. One of the major objects he studied is noncommutative formal series
with coefficients the iterated integrals of the coordinates of a path. For a
time interval $\left[ S,T\right] $, let $x:\left[ S,T\right] \rightarrow
\mathbb{R}
^{d}$ be a smooth path, and let $X_{1},\dots ,X_{d}$ be noncommutative
indeterminates. Consider the formal power series:%
\begin{equation}
\theta \left( x\right) :=1+\sum_{n\geq 1,i_{j}\in \left\{ 1,\dots ,d\right\}
}w_{i_{1}\cdots i_{n}}X_{i_{1}}\cdots X_{i_{n}}
\label{Chen formal power representation of paths}
\end{equation}%
where%
\begin{equation*}
w_{i_{1}\cdots i_{n}}:=\idotsint\nolimits_{S<u_{1}<\cdots
<u_{n}<T}dx_{u_{1}}^{i_{1}}\cdots dx_{u_{n}}^{i_{n}}\text{.}
\end{equation*}%
The space of paths in $%
\mathbb{R}
^{d}$ have an associative multiplication given by concatenation, and the set
of formal series have an associative multiplication that is the bilinear
extension of the concatenation of finite sequences. Chen \cite%
{chen1954iterated} proved that $\theta $ is an algebra homomorphism from
paths to formal series and satisfies $\theta \left( x\right) ^{-1}=\theta (%
\overleftarrow{x})$ where $\overleftarrow{x}$ denotes the path given by
running $x$ backwards. Based on Chen \cite{chen1957integration} $\theta $
takes values in a group whose elements are algebraic exponentials of Lie
series, and the multiplication in the group is given by
Campbell-Baker-Hausdorff formula.

In \cite{lyons1998differential}, Lyons developed the theory of rough paths.
He observed that, in controlled systems, both the driving path and the
solution path evolve in a group (denoted by $G$) rather than in a vector
space. Lyons also identified a family of metrics on group-valued paths such
that the It\^{o} map that sends a driving path to the solution path is
continuous. Elements in $G$ are algebraic exponentials of Lie series, and a
geometric $p$-rough path is a continuous path in $G$ with finite $p$%
-variation.

For a continuous bounded variation path, there exists a canonical lift of
the path to a geometric $1$-rough path given by the sequence of indefinite
iterated integrals. The sequence of definite iterated integrals is called
the signature of the continuous bounded variation path.

The following is Definition 1.1 Hambly and Lyons \cite{hambly2010uniqueness}.

\begin{definition}[Signature]
\label{Definition Signature}Let $\gamma $ be a path of bounded variation on $%
\left[ S,T\right] $ with values in a vector space $V$. Then its signature is
the sequence of definite iterated integrals%
\begin{eqnarray*}
X_{S,T} &=&\left( 1+X_{S,T}^{1}+\cdots +X_{S,T}^{k}+\cdots \right) \\
&=&\left( 1+\int_{S<u<T}d\gamma _{u}+\cdots +\int_{S<u_{1}<\cdots
<u_{k}<T}d\gamma _{u_{1}}\otimes \cdots \otimes d\gamma _{u_{k}}+\cdots
\right)
\end{eqnarray*}%
regarded as an element of an appropriate closure of the tensor algebra $%
T\left( V\right) =\bigoplus\nolimits_{n=0}^{\infty }V^{\otimes n}$.
\end{definition}

The signature is invariant under reparametrisations of the path.

Following \cite{hambly2010uniqueness} $X_{S,T}$ is also denoted by $S\left(
\gamma \right) $ where $S$ is the signature mapping that sends a continuous
bounded variation path to the sequence of definite iterated integrals.

\begin{notation}
Denote by $S:\gamma \mapsto S\left( \gamma \right) $ the signature mapping.
\end{notation}

The signature provides an efficient and effective description of the
information encoded in paths, and two paths with the same signature have the
same effect on all controlled systems. An important problem in the theory of
rough path is the `uniqueness of signature' problem: to describe the kernel
of the signature mapping. This problem goes back to Chen \cite%
{chen1958integration}\ where he proved that the map $\theta $ (signature)
provides a faithful representation for a family of paths that are
irreducible, piecewise regular, and continuous. In \cite%
{hambly2010uniqueness} Hambly and Lyons established quantitative estimates
of a path in terms of its signature, and proved the uniqueness of signature
result for continuous bounded variation paths. In \cite%
{boedihardjo2016signature} Boedihardjo, Geng, Lyons and Yang extended the
result to weak geometric rough paths over Banach spaces. There are also
progresses in the direction of machine learning, where rough path is
introduced as a new feature set for streamed data \cite{lyons2014feature,
kormilitzin2016application}.

\begin{notation}
For a vector space $V$, let $G\left( V\right) $ denote the set of group-like
elements in the tensor algebra $T\left( V\right) $.
\end{notation}

Based on Corollary 3.3 \cite{reutenauer2003free}, elements in $G\left(
V\right) $ are algebraic exponentials of Lie series, and $G\left( V\right) $
is a group.

A time interval is an interval of the form $\left[ S,T\right] $ for $0\leq
S\leq T<\infty $. For a time interval $J$ and a Banach space $V$, let $%
BV\left( J,V\right) $ denote the set of continuous bounded variation paths $%
J\rightarrow V$. Based on Chen \cite{chen1954iterated}, the signature of
paths in $BV\left( J,V\right) $ form a subgroup of $G\left( V\right) $. For $%
x\in BV\left( J_{1},V\right) $ and $y\in BV\left( J_{2},V\right) $, let $%
x\ast y$ denote the concatenation of $x$ with $y$, and let $\overleftarrow{x}
$ denote the time-reversing of $x$. Then the following identities hold:%
\begin{equation*}
S\left( x\ast y\right) =S\left( x\right) S\left( y\right) \text{ and }%
S\left( \overleftarrow{x}\right) =S\left( x\right) ^{-1}.
\end{equation*}

Let $V$ be a Banach space. Suppose the tensor powers of $V$ are equipped
with admissible norms Definition 1.25 p.20 \cite{lyons2007differential}.
Following Definition 2.1 \cite{boedihardjo2016signature}, we equip $G\left(
V\right) $ with the metric%
\begin{equation*}
d\left( a,b\right) :=\max_{k\in
\mathbb{N}
}\left\Vert \pi _{k}\left( a^{-1}b\right) \right\Vert ^{\frac{1}{k}}
\end{equation*}%
for $a,b\in G\left( V\right) $, where $\pi _{k}$ denotes the projection of $%
T\left( V\right) $ to $V^{\otimes k}$. Based on Definition 1.2.2 \cite%
{lyons1998differential} for a time interval $J$, $\omega :\left\{ \left(
s,t\right) |s\leq t,s\in J,t\in J\right\} \rightarrow \overline{%
\mathbb{R}
^{+}}\ $is a \textit{control} if $\omega $ is continuous, super-additive and
vanishes on the diagonal.

The following definition is based on Definition 1.2.2 Lyons \cite%
{lyons1998differential}.

\begin{definition}[Geometric Rough Paths]
\label{Definition geometric rough paths}For a time interval $J$ and a Banach
space $V$, $X:J\rightarrow G\left( V\right) $ is called a geometric $p$%
-rough path for some $p\geq 1$, if there exists a control $\omega :\left\{
\left( s,t\right) |s\leq t,s\in J,t\in J\right\} \rightarrow \overline{%
\mathbb{R}
^{+}}$ such that%
\begin{equation*}
\left\Vert X\right\Vert _{p-var,\left[ s,t\right] }^{p}\leq \omega \left(
s,t\right)
\end{equation*}%
for every $s\leq t$, where
\begin{equation*}
\left\Vert X\right\Vert _{p-var,\left[ s,t\right] }^{p}:=\sup_{D\subset %
\left[ s,t\right] }\sum_{k,t_{k}\in D}d\left( X_{t_{k}},X_{t_{k+1}}\right)
^{p},
\end{equation*}%
with the supremum over all $D=\left\{ t_{k}\right\} _{k=0}^{n}\subset \left[
s,t\right] ,s=t_{0}<\cdots <t_{n}=t,n\geq 1$.
\end{definition}

\section{An algebra of permutations}

\label{Section hopf algebra of permutations}

The group-like elements in the tensor algebra is the group of characters of
the shuffle algebra p.54 Theorem 3.2 Reutenauer \cite{reutenauer2003free}.
The shuffle product can be expressed in terms of permutations based on the
Malvenuto--Reutenauer Hopf algebra (denoted by MR) introduced in \cite%
{malvenuto1994produits,malvenuto1995duality}. MR is a Hopf algebra of
permutations and is noncommutative.

We first review the shuffle Hopf algebra and its dual space based on Section
1.5 Reutenauer \cite{reutenauer2003free}. Let $A$ be a (possibly infinite)
set, and let $K$ be a commutative $%
\mathbb{Q}
$-algebra. Let $K\!\left\langle A\right\rangle $ denote the set of
non-commutative polynomials on $A$ over $K$. Let $A^{\ast }$ denote the free
monoid generated by $A$ that is the set of finite sequences of elements in $%
A $ including the empty sequence denoted by $e$. The operation on $A^{\ast }$
is given by concatenation:%
\begin{equation*}
\left( a_{1}\cdots a_{n}\right) \left( a_{n+1}\cdots a_{n+m}\right)
:=a_{1}\cdots a_{n+m}
\end{equation*}%
for $a_{i}\in A$, with $e$ the identity element. There is a natural
embedding $A\hookrightarrow A^{\ast }$. Based on Ree \cite{ree1958lie} the
shuffle product $sh:K\!\left\langle A\right\rangle \otimes K\!\left\langle
A\right\rangle \rightarrow K\!\left\langle A\right\rangle $ is the $K$%
-bilinear map that can be defined recursively by%
\begin{equation*}
sh\circ \left( w_{1}a_{1}\otimes w_{2}a_{2}\right) :=\left( sh\circ \left(
w_{1}a_{1}\otimes w_{2}\right) \right) a_{2}+\left( sh\circ \left(
w_{1}\otimes w_{2}a_{2}\right) \right) a_{1}
\end{equation*}%
for $w_{i}\in A^{\ast },a_{i}\in A$, where $wa$ denotes the concatenation of
$w\in A^{\ast }$ with $a\in A$, and $sh\circ \left( e\otimes w\right)
=sh\circ \left( w\otimes e\right) :=w$ for $w\in A^{\ast }$. The product $sh$
is associative with unit $u\left( k\right) :=ke$ for $k\in K$. Let $\delta
^{\prime }:K\!\left\langle A\right\rangle \rightarrow K\!\left\langle
A\right\rangle \otimes K\!\left\langle A\right\rangle $ denote the
deconcatenation coproduct that is the $K$-linear map given by%
\begin{equation*}
\delta ^{\prime }\left( a_{1}\cdots a_{n}\right) :=\sum_{k=0}^{n}a_{1}\cdots
a_{k}\otimes a_{k+1}\cdots a_{n}
\end{equation*}%
for $a_{i}\in A,n\geq 1$, and $\delta ^{\prime }\left( e\right) :=e\otimes e$%
. The counit $\epsilon \ $is the projection of $K\!\left\langle
A\right\rangle $ to the space spanned by $e\in A^{\ast }$. $\left(
K\!\left\langle A\right\rangle ,sh,u,\delta ^{\prime },\epsilon \right) $ is
a Hopf algebra p.31 \cite{reutenauer2003free} which we call the shuffle Hopf
algebra.

Let $K\!\left\langle \!\left\langle A\right\rangle \!\right\rangle $ denote
the set of formal series $s=\sum_{w\in A^{\ast }}\left( s,w\right) w$ on $A$
over $K$. The concatenation product $conc:K\!\left\langle \!\left\langle
A\right\rangle \!\right\rangle \otimes K\!\left\langle \!\left\langle
A\right\rangle \!\right\rangle \rightarrow K\!\left\langle \!\left\langle
A\right\rangle \!\right\rangle $ is the $K$-bilinear map given by
\begin{equation*}
\left( conc\circ \left( s\otimes t\right) ,w\right) :=\sum_{uv=w}\left(
s,u\right) \left( t,v\right)
\end{equation*}%
for $w\in A^{\ast }$. The map $\delta $\ on $K\!\left\langle \!\left\langle
A\right\rangle \!\right\rangle $ is the homomorphism of the concatenation
algebra given by $\delta \left( a\right) :=e\otimes a+a\otimes e$ for each $%
a\in A$, and is of the explicit form p.25 \cite{reutenauer2003free}
\begin{equation*}
\delta \left( s\right) =\sum_{w_{1},w_{2}\in A^{\ast }}\left( s,sh\circ
\left( w_{1}\otimes w_{2}\right) \right) w_{1}\otimes w_{2}
\end{equation*}%
for $s\in K\!\left\langle \!\left\langle A\right\rangle \!\right\rangle $. $%
K\!\left\langle \!\left\langle A\right\rangle \!\right\rangle $ is nearly a
Hopf algebra, but the map $\delta $ does not necessarily take values in $%
K\!\left\langle \!\left\langle A\right\rangle \!\right\rangle \otimes
K\!\left\langle \!\left\langle A\right\rangle \!\right\rangle $ and the sum
in $\delta \left( s\right) =\sum s_{\left( 1\right) }\otimes s_{\left(
2\right) }$ can be infinite p.38 \cite{reutenauer2003free}.

There is a duality between $K\!\left\langle \!\left\langle A\right\rangle
\!\right\rangle $ and $K\!\left\langle A\right\rangle $ given by%
\begin{equation*}
\left( s,p\right) :=\sum_{w\in A^{\ast }}\left( s,w\right) \left( p,w\right)
\end{equation*}%
for $s\in K\!\left\langle \!\left\langle A\right\rangle \!\right\rangle $
and $p\in K\!\left\langle A\right\rangle $. Then the following duality holds
p.26 \cite{reutenauer2003free}:%
\begin{eqnarray*}
\left( s,sh\circ \left( p\otimes q\right) \right) &=&\left( \delta
s,p\otimes q\right) \\
\left( conc\circ \left( s\otimes t\right) ,p\right) &=&\left( s\otimes
t,\delta ^{\prime }p\right)
\end{eqnarray*}%
for $s,t\in K\!\left\langle \!\left\langle A\right\rangle \!\right\rangle $
and $p,q\in K\!\left\langle A\right\rangle $.

\begin{definition}[Group-like Elements]
The group-like elements in $K\!\left\langle \!\left\langle A\right\rangle
\!\right\rangle $ is the set of\ $s\in K\!\left\langle \!\left\langle
A\right\rangle \!\right\rangle $ that satisfies $\delta s=s\otimes s$.
\end{definition}

\begin{notation}
Let $G\left( A\right) $ denote the set of group-like elements in $%
K\!\left\langle \!\left\langle A\right\rangle \!\right\rangle $.
\end{notation}

Based on Theorem 3.2 p.54 \cite{reutenauer2003free} the group-like elements
in $K\!\left\langle \!\left\langle A\right\rangle \!\right\rangle $ are
algebraic exponentials of Lie series, and they form a group in $%
K\!\left\langle \!\left\langle A\right\rangle \!\right\rangle $ Corollary
3.3 \cite{reutenauer2003free}. Based on the duality between $sh$ and $\delta
$, group-like elements in $K\!\left\langle \!\left\langle A\right\rangle
\!\right\rangle $ are the set of characters of the shuffle algebra on $%
K\!\left\langle A\right\rangle $ Theorem 3.2 \cite{reutenauer2003free}: $%
s\in G\left( A\right) $ iff $\left( s,p\right) \left( s,q\right) =\left(
s,sh\circ \left( p\otimes q\right) \right) $ for every $p,q\in
K\!\left\langle A\right\rangle $.

MR is a $%
\mathbb{Z}
$-Hopf algebra on permutations $S:=\cup _{n\geq 0}S_{n}$.

Let $\limfunc{End}\left( K\!\left\langle A\right\rangle \right) $ denote the
$K$-module of linear endomorphisms of $K\!\left\langle A\right\rangle $.
Based on Proposition 1.10 \cite{reutenauer2003free} $\limfunc{End}\left(
K\!\left\langle A\right\rangle \right) $ becomes a $K$-associative algebra
with the convolution product $\ast ^{\prime }$ given by
\begin{equation*}
f\ast ^{\prime }g=sh\circ \left( f\otimes g\right) \circ \delta ^{\prime }
\end{equation*}%
for $f,g\in \limfunc{End}\left( K\!\left\langle A\right\rangle \right) $.
There is an embedding of permutations $%
\mathbb{Z}
S$ in $\limfunc{End}\left( K\!\left\langle A\right\rangle \right) $ given by%
\begin{equation*}
\sigma \cdot \left( a_{1}\cdots a_{n}\right) :=a_{\sigma 1}\cdots a_{\sigma
n}
\end{equation*}%
for $\sigma \in S_{n}$ and $a_{i}\in A$. Based on \cite{malvenuto1995duality}%
, the product in MR is the $%
\mathbb{Z}
$-bilinear map $\ast ^{\prime }:%
\mathbb{Z}
S\times
\mathbb{Z}
S\rightarrow
\mathbb{Z}
S$ and is of the explicit form%
\begin{equation*}
\sigma \ast ^{\prime }\rho :=sh\circ \left( \sigma \otimes \bar{\rho}\right)
\end{equation*}%
for $\sigma \in S_{n}$ and $\rho \in S_{m}$, where permutations are
considered as words with $\bar{\rho}\left( i\right) :=n+\rho \left( i\right)
$. The product $\ast ^{\prime }$ is associative with identity element $%
\lambda \in S_{0}$. The coproduct on MR $\bigtriangleup ^{\prime }:%
\mathbb{Z}
S\rightarrow
\mathbb{Z}
S\otimes
\mathbb{Z}
S$ is the $%
\mathbb{Z}
$-linear map given by%
\begin{equation*}
\bigtriangleup ^{\prime }:=\left( \limfunc{st}\otimes \limfunc{st}\right)
\circ \delta ^{\prime }
\end{equation*}%
where `$\limfunc{st}$' denotes the unique increasing map that sends a
sequence of $k$ non-repeating integers to $\left\{ 1,2,\dots ,k\right\} $
for $k\geq 1$. The counit is the projection of $%
\mathbb{Z}
S$ to the space spanned by $\lambda \in S_{0}$. Based on \cite%
{malvenuto1994produits, malvenuto1995duality, poirier1995algebres} MR\textbf{%
\ }is a Hopf algebra that is self-dual, free and cofree, so is neither
commutative nor cocommutative.

For $f\in \limfunc{End}\left( K\!\left\langle A\right\rangle \right) $,
define the adjoint map $f^{\ast }\in \limfunc{End}\left( K\!\left\langle
\!\left\langle A\right\rangle \!\right\rangle \right) $ as%
\begin{equation*}
\left( f^{\ast }s,p\right) :=\left( s,fp\right)
\end{equation*}%
for $s\in K\!\left\langle \!\left\langle A\right\rangle \!\right\rangle $
and $p\in K\!\left\langle A\right\rangle $. As a sub-algebra of $\limfunc{End%
}\left( K\!\left\langle A\right\rangle \right) $, MR induces a sub-algebra
of $\limfunc{End}\left( K\!\left\langle \!\left\langle A\right\rangle
\!\right\rangle \right) $. Proposition \ref{Proposition free Lie group is a
group of characters} below states that the group-like elements in $%
K\!\left\langle \!\left\langle A\right\rangle \!\right\rangle $ (denoted by $%
G\left( A\right) $) is a group of characters of MR: for $s\in G\left(
A\right) $,
\begin{eqnarray*}
\widehat{s}:\left(
\mathbb{Z}
S,\ast ^{\prime }\right) &\rightarrow &\left( K\!\left\langle \!\left\langle
A\right\rangle \!\right\rangle ,conc\right) \\
\sigma &\mapsto &\sigma ^{\ast }s
\end{eqnarray*}%
is an algebra homomorphism.\ Proposition \ref{Proposition free Lie group is
a group of characters} is closely related to the cotensor algebra p.248
Ronco \cite{ronco2000primitive}.

\begin{proposition}
\label{Proposition free Lie group is a group of characters}For $s\in G\left(
A\right) $, define a $%
\mathbb{Z}
$-linear map $\widehat{s}:%
\mathbb{Z}
S\rightarrow K\!\left\langle \!\left\langle A\right\rangle \!\right\rangle $
by%
\begin{equation*}
\widehat{s}\left( \sigma \right) :=\sum_{w\in A^{\ast }}\left( s,\sigma
\cdot w\right) w.
\end{equation*}%
Then%
\begin{equation*}
conc\circ \left( \widehat{s}\left( \sigma \right) \otimes \widehat{s}\left(
\rho \right) \right) =\widehat{s}\left( \sigma \ast ^{\prime }\rho \right)
\end{equation*}%
for $\sigma ,\rho \in
\mathbb{Z}
S$.
\end{proposition}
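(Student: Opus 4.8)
The plan is to establish the identity one coefficient at a time in the formal series ring $K\!\left\langle \!\left\langle A\right\rangle \!\right\rangle $, that is, to show that the two sides pair identically against every word $w\in A^{\ast }$. Since $\ast ^{\prime }$, $conc$, and $\sigma \mapsto \widehat{s}\left( \sigma \right) $ are all $\mathbb{Z}$-linear (the last by construction, the middle bilinear on formal series, the first bilinear by definition of MR), it suffices to treat the case in which $\sigma \in S_{n}$ and $\rho \in S_{m}$ are single permutations; the general case follows by expanding in the basis $S$ and using bilinearity of all three operations. All the manipulations of formal series below are legitimate because each word $w$ receives only finitely many contributions.

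First I would unwind the left-hand side. By definition $\left( \widehat{s}\left( \sigma \right) ,w\right) =\left( s,\sigma \cdot w\right) $, and since $\sigma \in S_{n}$ acts on $K\!\left\langle A\right\rangle $ as an endomorphism supported on the degree-$n$ component (consistently with $\lambda \in S_{0}$ acting as the projection onto $Ke$, the convolution unit), this vanishes unless $\left\vert w\right\vert =n$. Applying the duality $\left( conc\circ \left( s\otimes t\right) ,p\right) =\left( s\otimes t,\delta ^{\prime }p\right) $ with $p=w$ and $\delta ^{\prime }w=\sum_{uv=w}u\otimes v$, the coefficient of $w$ in $conc\circ \left( \widehat{s}\left( \sigma \right) \otimes \widehat{s}\left( \rho \right) \right) $ equals $\sum_{uv=w}\left( s,\sigma \cdot u\right) \left( s,\rho \cdot v\right) $, which is $0$ unless $\left\vert w\right\vert =n+m$, in which case, writing $w=a_{1}\cdots a_{n+m}$, there is a single surviving term $\left( s,a_{\sigma 1}\cdots a_{\sigma n}\right) \left( s,a_{n+\rho 1}\cdots a_{n+\rho m}\right) $. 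Now I invoke the hypothesis $s\in G\left( A\right) $ in its character form (Theorem 3.2 of \cite{reutenauer2003free}, recalled above), $\left( s,p\right) \left( s,q\right) =\left( s,sh\circ \left( p\otimes q\right) \right) $, which rewrites this coefficient as $\bigl( s,\,sh\circ \left( a_{\sigma 1}\cdots a_{\sigma n}\otimes a_{n+\rho 1}\cdots a_{n+\rho m}\right) \bigr) $.

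The crux is then the purely combinatorial identity
\[
sh\circ \bigl( a_{\sigma 1}\cdots a_{\sigma n}\otimes a_{n+\rho 1}\cdots a_{n+\rho m}\bigr) =\left( \sigma \ast ^{\prime }\rho \right) \cdot \left( a_{1}\cdots a_{n+m}\right) ,
\]
which I would prove by a bijection between the interleavings defining the shuffle on the left and the permutations $\tau \in S_{n+m}$ occurring in $\sigma \ast ^{\prime }\rho =sh\circ \left( \sigma \otimes \bar{\rho}\right) $: an interleaving that places the first factor at the increasing position set $P\subseteq \left\{ 1,\dots ,n+m\right\} $ corresponds to the unique $\tau $ whose subword at the positions of $P$ is $\sigma \left( 1\right) \cdots \sigma \left( n\right) $ and whose subword at the complementary positions is $\left( n+\rho \left( 1\right) \right) \cdots \left( n+\rho \left( m\right) \right) $ — which is exactly the (multiplicity-one, since the two letter sets are disjoint) shuffle term of the words $\sigma $ and $\bar{\rho}$ for that split — and under this correspondence the interleaved word is $a_{\tau 1}\cdots a_{\tau \left( n+m\right) }=\tau \cdot \left( a_{1}\cdots a_{n+m}\right) $. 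Granting this, the coefficient of $w$ becomes $\left( s,\left( \sigma \ast ^{\prime }\rho \right) \cdot w\right) =\left( \widehat{s}\left( \sigma \ast ^{\prime }\rho \right) ,w\right) $, which is precisely the coefficient of $w$ on the right-hand side; as $w\in A^{\ast }$ was arbitrary, the two formal series coincide, and bilinearity concludes. The only real obstacle is this last identity: everything else is a bookkeeping of definitions, whereas matching the shuffle of the letter-words $a_{\sigma \bullet }$ and $a_{n+\rho \bullet }$ with the Malvenuto--Reutenauer product $sh\circ \left( \sigma \otimes \bar{\rho}\right) $ requires care with the index conventions, in particular the role of the shift $i\mapsto n+i$ which is exactly what makes the second word's letters disjoint from the first's and hence keeps the shuffle multiplicity-free.
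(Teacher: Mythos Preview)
Your proof is correct and follows essentially the same route as the paper: both reduce to the identity $(\sigma \ast ^{\prime }\rho )\cdot w = sh\circ ((\sigma \cdot u)\otimes (\rho \cdot v))$ for $w=uv$, $|u|=n$, $|v|=m$, and then apply the shuffle-character property of $s\in G(A)$. The only cosmetic difference is that the paper reads this identity directly off the convolution definition $\sigma \ast ^{\prime }\rho = sh\circ (\sigma \otimes \rho )\circ \delta ^{\prime }$ as endomorphisms of $K\langle A\rangle$, whereas you verify it by an explicit bijection between shuffle interleavings and summands of $sh\circ (\sigma \otimes \bar{\rho})$.
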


\begin{proof}
Since $\sigma \ast ^{\prime }\rho :=sh\circ \left( \sigma \otimes \rho
\right) \circ \delta ^{\prime }$,%
\begin{equation*}
\left( \sigma \ast ^{\prime }\rho \right) \cdot w=sh\circ \left( \left(
\sigma \cdot u\right) \otimes \left( \rho \cdot v\right) \right)
\end{equation*}%
for $\sigma \in S_{n},\rho \in S_{m},w\in A^{\ast },w=uv,\left\vert
u\right\vert =n,\left\vert v\right\vert =m$, where $\left\vert u\right\vert $
denotes the number of letters in $u\in A^{\ast }$. Since $s\in G\left(
A\right) $ is a character of the shuffle algebra on $K\!\left\langle
A\right\rangle $,%
\begin{equation*}
\left( s,\left( \sigma \ast ^{\prime }\rho \right) \cdot w\right) =\left(
s,sh\circ \left( \left( \sigma \cdot u\right) \otimes \left( \rho \cdot
v\right) \right) \right) =\left( s,\sigma \cdot u\right) \left( s,\rho \cdot
v\right) .
\end{equation*}%
The statement follows.
\end{proof}

Based on the bidendriform algebra structure of the shuffle algebra Loday and
Ronco \cite{loday1998hopf} Loday \cite{loday2001dialgebras}, consider $\succ
$ that is an abstract iterated integration.

\begin{notation}
Denote $\succ :\left( K\!\left\langle A\right\rangle \times K\!\left\langle
A\right\rangle \right) \backslash \left( K\!e\times K\!e\right) \rightarrow
K\!\left\langle A\right\rangle $ as the $K$-bilinear map given by%
\begin{align*}
& \left( a_{1}\cdots a_{n}\right) \succ \left( a_{n+1}\cdots a_{n+m}\right)
\\
:=\text{ }& sh\circ \left( \left( a_{1}\cdots a_{n}\right) \otimes \left(
a_{n+1}\cdots a_{n+m-1}\right) \right) a_{n+m}
\end{align*}%
for $a_{i}\in A,m\geq 1$, where $wa$ denotes the concatenation of $w\in
A^{\ast }$ with $a\in A$; $\left( a_{1}\cdots a_{n}\right) \succ e:=0\in K$
for $a_{i}\in A,n\geq 1$.
\end{notation}

In defining the integration of geometric rough paths, Lyons considered an
ordered shuffle to define almost multiplicative functionals p.285 Definition
3.2.2 \cite{lyons1998differential}. The ordered shuffle can be viewed as
iterated applications of $\succ $ as defined in Notation 2.3 \cite%
{ronco2000primitive}.

The following Lemma helps to prove that indefinite integrals of one-forms
along geometric rough paths are geometric rough paths. Consider $p_{1},\dots
,p_{n}\in K\!\left\langle A\right\rangle \ $that satisfy $\left(
p_{i},e\right) =0,i=1,\dots ,n$, where $p=\sum_{w\in A^{\ast }}\left(
p,w\right) w$ and $e$ is the empty sequence in $A^{\ast }$. Define%
\begin{eqnarray*}
m_{\succ }\left( p_{1}\right) &:&=p_{1} \\
m_{\succ }\left( p_{1},\cdots ,p_{n}\right) &:&=\left( \cdots \left(
p_{1}\succ p_{2}\right) \succ \cdots \succ p_{n-1}\right) \succ p_{n}.
\end{eqnarray*}

\begin{lemma}
\label{Proposition iterated integrals of words}Let $p_{1},\dots ,p_{n+m}\in
K\!\left\langle A\right\rangle \ $satisfy $\left( p_{i},e\right)
=0,i=1,\dots ,n+m$. Then%
\begin{eqnarray*}
&&sh\circ \left( m_{\succ }\left( p_{1},\cdots ,p_{n}\right) \otimes
m_{\succ }\left( p_{n+1},\cdots ,p_{n+m}\right) \right) \\
&=&\sum_{\rho \in 1_{n}\ast ^{\prime }1_{m}}m_{\succ }\left( p_{\rho \left(
1\right) },\cdots ,p_{\rho \left( n+m\right) }\right) .
\end{eqnarray*}
\end{lemma}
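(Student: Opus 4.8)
The plan is to prove the identity by analysing directly the words occurring on each side. Since $sh$, $\succ $ and concatenation are $K$-multilinear, both sides are $K$-multilinear in $\left( p_{1},\dots ,p_{n+m}\right) $, so it suffices to treat the case $p_{i}=w_{i}$ for nonempty words $w_{i}\in A^{\ast }$. All these operations also commute with any map of alphabets, in particular with the linear map $K\!\left\langle B\right\rangle \rightarrow K\!\left\langle A\right\rangle $ induced by a surjection $B\rightarrow A$; relabelling each letter of $w_{i}$ so as to record also the index $i$ and the position of that letter within $w_{i}$, I may therefore assume that $w_{1},\dots ,w_{n+m}$ lie over pairwise disjoint alphabets and have no repeated letters, so that every word appearing below occurs with coefficient $1$.

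The first step is an explicit description of $m_{\succ }\left( w_{1},\dots ,w_{k}\right) $. Let $b_{i}$ denote the last letter of $w_{i}$ and $v_{i}$ the word $w_{i}$ with $b_{i}$ deleted. I claim $m_{\succ }\left( w_{1},\dots ,w_{k}\right) $ is the sum of all shuffles $u$ of $w_{1},\dots ,w_{k}$ (i.e.\ interleavings preserving the internal order of each $w_{i}$) such that, for every $i$, the letter $b_{i}$ occurs in $u$ after all the letters of $w_{1},\dots ,w_{i-1}$. This follows by induction on $k$ from $m_{\succ }\left( w_{1},\dots ,w_{k}\right) =m_{\succ }\left( w_{1},\dots ,w_{k-1}\right) \succ w_{k}$ together with the defining identity $x\succ \left( vb\right) =\left( sh\circ \left( x\otimes v\right) \right) b$: a word lies in $m_{\succ }\left( w_{1},\dots ,w_{k}\right) $ exactly when it is obtained from a word of $m_{\succ }\left( w_{1},\dots ,w_{k-1}\right) $ by shuffling in $v_{k}$ and appending $b_{k}$ at the end, which is precisely the stated condition, and disjointness of the alphabets keeps every coefficient equal to $1$.

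Applying this description to the two factors on the left-hand side, and using that a shuffle preserves the relative order of the letters coming from each factor, $sh\circ \left( m_{\succ }\left( w_{1},\dots ,w_{n}\right) \otimes m_{\succ }\left( w_{n+1},\dots ,w_{n+m}\right) \right) $ is the sum of all shuffles $u$ of $w_{1},\dots ,w_{n+m}$ with $b_{i}$ after $w_{1},\dots ,w_{i-1}$ for $i\leq n$ and $b_{j}$ after $w_{n+1},\dots ,w_{j-1}$ for $n<j\leq n+m$; write $L$ for this set of words. On the right-hand side, $1_{n}\ast ^{\prime }1_{m}=sh\circ \left( 12\cdots n\otimes \left( n+1\right) \cdots \left( n+m\right) \right) $ is exactly the set of $\left( n,m\right) $-shuffle permutations $\rho $, each occurring with multiplicity one, and the same description gives that $m_{\succ }\left( w_{\rho \left( 1\right) },\dots ,w_{\rho \left( n+m\right) }\right) $ is the sum over the set $R_{\rho }$ of shuffles $u$ of $w_{1},\dots ,w_{n+m}$ for which $b_{\rho \left( k\right) }$ occurs after $w_{\rho \left( 1\right) },\dots ,w_{\rho \left( k-1\right) }$ for every $k$.

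It then remains to check $L=\bigsqcup _{\rho }R_{\rho }$. Given $u\in L$: since each $b_{i}$ lies after $w_{1},\dots ,w_{i-1}$, in particular after $b_{1},\dots ,b_{i-1}$, the last letters of the first block occur in $u$ in the order $b_{1},\dots ,b_{n}$, and similarly the last letters of the second block occur in the order $b_{n+1},\dots ,b_{n+m}$; reading off the block indices in the order in which their last letters occur in $u$ therefore yields an $\left( n,m\right) $-shuffle $\rho $, and this is the only $\rho $ for which $u\in R_{\rho }$ is possible, since membership in $R_{\rho }$ forces the last letters to occur in the order $b_{\rho \left( 1\right) },\dots ,b_{\rho \left( n+m\right) }$. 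That $u$ does belong to this $R_{\rho }$ holds because every non-last letter of a block precedes that block's last letter in $u$, so $w_{\rho \left( j\right) }$ lies entirely before $b_{\rho \left( k\right) }$ whenever $j<k$. Conversely, if $\rho $ is an $\left( n,m\right) $-shuffle and $u\in R_{\rho }$, then since $1,\dots ,n$ appear in $\rho $ in increasing order the blocks $w_{1},\dots ,w_{i-1}$ all precede $w_{i}$ in the listing $w_{\rho \left( 1\right) },\dots ,w_{\rho \left( n+m\right) }$ for $i\leq n$, so $b_{i}$ occurs after all of them, and symmetrically for the second block; hence $u\in L$. Summing over the words of $L$ and restoring the original letters yields the lemma. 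The delicate point is this final step — specifically the observation that in any word of $m_{\succ }\left( w_{1},\dots ,w_{k}\right) $ the last letters $b_{1},\dots ,b_{k}$ are automatically in increasing order of position and each block's other letters precede its own last letter — which is what produces the bijection between $L$ and $\bigsqcup _{\rho }R_{\rho }$.
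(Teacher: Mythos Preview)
Your proof is correct and follows essentially the same approach as the paper: both reduce by multilinearity to nonempty words and then partition the shuffles appearing on the left-hand side according to the relative order in which the last letters of the blocks $w_{1},\dots ,w_{n+m}$ occur, identifying each part with the corresponding term $m_{\succ }\left( w_{\rho (1)},\dots ,w_{\rho (n+m)}\right) $. The only presentational difference is that you pass to a larger alphabet so that all letters are distinct and can argue at the level of sets of words with coefficient $1$, whereas the paper keeps the original alphabet and instead indexes terms by permutations $\alpha \in S_{l_{n+m}}$ (the sets $E(\rho )$), which handles multiplicities automatically; the underlying bijection is the same.
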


\begin{proof}
Based on the multi-linearity of $m_{\succ }\left( ,\cdots ,\right) $, we
assume $p_{i}=w_{i}$ for $w_{i}\in A^{\ast },\left\vert w_{i}\right\vert
\geq 1$, $i=1,\dots ,n+m$. Based on the order of the image of the last
element of $\left( w_{i}\right) _{i=1}^{n}$ in $m_{\succ }\left(
w_{1},\cdots ,w_{n}\right) $, elements in $m_{\succ }\left( w_{1},\cdots
,w_{n}\right) $ can be divided into $\frac{\left( n+m\right) !}{n!m!}$
subsets indexed by elements in $1_{n}\ast ^{\prime }1_{m}$. Indeed, denote $%
l_{0}:=0,l_{k}:=\sum_{j=1}^{k}\left\vert w_{i}\right\vert ,k=1,\dots ,n+m$.
For each $\rho \in 1_{n}\ast ^{\prime }1_{m}$, let $E\left( \rho \right) $
denote the set of $\alpha \in S_{l_{n+m}}$ that satisfy $\alpha \left(
l_{k}+i\right) <\alpha \left( l_{k}+i+1\right) ,i=1,\dots ,\left\vert
w_{k+1}\right\vert -1,k=0,\dots ,n+m-1$ and $\alpha \left( l_{\rho \left(
k\right) }\right) <\alpha \left( l_{\rho \left( k+1\right) }\right)
,k=1,\dots ,n+m-1$. Then
\begin{eqnarray*}
&&sh\circ \left( m_{\succ }\left( w_{1},\cdots ,w_{n}\right) \otimes
m_{\succ }\left( w_{n+1},\cdots ,w_{n+m}\right) \right) \\
&=&\sum_{\rho \in 1_{n}\ast ^{\prime }1_{m}}\sum_{\alpha \in E\left( \rho
\right) }\alpha \cdot \left( w_{1}\cdots w_{n+m}\right) \\
&=&\sum_{\rho \in 1_{n}\ast ^{\prime }1_{m}}m_{\succ }\left( w_{\rho \left(
1\right) },\cdots ,w_{\rho \left( n+m\right) }\right)
\end{eqnarray*}%
where in the second line $w_{1}\cdots w_{n+m}\in A^{\ast }$ denotes the
concatenation of $w_{i}$.
\end{proof}

The operation $\succ :K\!\left\langle A\right\rangle \times K\!\left\langle
A\right\rangle \rightarrow K\!\left\langle A\right\rangle $ induces an
operation on $\limfunc{End}\left( K\!\left\langle A\right\rangle \right) $
given by%
\begin{equation*}
f\succ g:=\succ \circ \left( f\otimes g\right) \circ \delta ^{\prime }.
\end{equation*}%
$\succ $ is closed on permutations \cite{ronco2000primitive}.

\begin{notation}
Let $\succ :\left(
\mathbb{Z}
S\times
\mathbb{Z}
S\right) \backslash \left(
\mathbb{Z}
S_{0}\times
\mathbb{Z}
S_{0}\right) \rightarrow
\mathbb{Z}
S$ denote the $%
\mathbb{Z}
$-bilinear map given by%
\begin{equation*}
\sigma \succ \rho :=\sigma \succ \bar{\rho}
\end{equation*}%
for $\sigma \in S_{n},\rho \in S_{m}$, where permutations are viewed as
words with $\bar{\rho}\left( i\right) :=n+\rho \left( i\right) $.
\end{notation}

MR is actually a bidendriform algebra Loday \cite{loday2001dialgebras}. With
$\left( 1\right) \in S_{1}$, define%
\begin{eqnarray*}
\mathcal{I}:%
\mathbb{Z}
S &\rightarrow &%
\mathbb{Z}
S \\
\sigma &\mapsto &\sigma \succ \left( 1\right) .
\end{eqnarray*}%
$\mathcal{I}$ can be viewed as an abstract integration. Since $%
\bigtriangleup ^{\prime }\left( 1\right) =\left( 1\right) \otimes \lambda
+\lambda \otimes \left( 1\right) $ for $\lambda \in S_{0}$, based on p.248
Definition 1.2 (b) Ronco \cite{ronco2000primitive}, for $\sigma \in
\mathbb{Z}
S$,
\begin{equation*}
\bigtriangleup ^{\prime }\mathcal{I}\left( \sigma \right) \mathcal{=I}\left(
\sigma \right) \otimes \lambda +\sum_{\bigtriangleup ^{\prime }\sigma
=\sigma _{\left( 1\right) }\otimes \sigma _{\left( 2\right) }}\sigma
_{\left( 1\right) }\otimes \mathcal{I}\left( \sigma _{\left( 2\right)
}\right) .
\end{equation*}

Proposition \ref{Proposition expression of iterated integration of
permutations} below helps to prove that slowly-varying one-forms are closed
under iterated integration (Proposition \ref{Proposition stability under
iterated integration}). Let $1_{n}$ denote the identity element in $%
S_{n},n\geq 1$, and $1_{0}:=\lambda \in S_{0}$. $G\left( A\right) $ denotes
the group-like elements in $K\!\left\langle \!\left\langle A\right\rangle
\!\right\rangle $.

\begin{proposition}
\label{Proposition expression of iterated integration of permutations}For $%
s\in G\left( A\right) $, define $\widehat{s}:%
\mathbb{Z}
S\rightarrow K\!\left\langle \!\left\langle A\right\rangle \!\right\rangle $
as in Proposition \ref{Proposition free Lie group is a group of characters}.
Then%
\begin{eqnarray*}
&&\savestack{\tmpbox}{\stretchto{ \scaleto{
\scalerel*[\widthof{\ensuremath{conc\circ \left( s\otimes t\right)
}}]{\kern-.6pt\bigwedge\kern-.6pt} {\rule[-\textheight/2]{1ex}{\textheight}}
}{\textheight}}{0.5ex}}\stackon[1pt]{conc\circ \left( s\otimes t\right)
}{\tmpbox}\left( 1_{n_{1}}\succ 1_{n_{2}}\right) -\widehat{s}\left(
1_{n_{1}}\succ 1_{n_{2}}\right) \\
&=&\sum_{\substack{ k_{1}=0,\dots ,n_{1}  \\ k_{2}=0,\dots ,n_{2}-1}}\rho
_{k_{2},n_{1}-k_{1}}\cdot \left( conc\circ \left( \widehat{s}\left(
1_{k_{1}}\ast ^{\prime }1_{k_{2}}\right) \otimes \widehat{t}\left(
1_{n_{1}-k_{1}}\succ 1_{n_{2}-k_{2}}\right) \right) \right)
\end{eqnarray*}%
for $s,t\in G\left( A\right) $ and $n_{1}\geq 0,n_{2}\geq 1$, where $\sigma
\cdot \left( a_{1}\cdots a_{n}\right) :=a_{\sigma 1}\cdots a_{\sigma n}$ for
$\sigma \in S_{n}$ and $a_{i}\in A$; $\rho _{k_{2},n_{1}-k_{1}}\in
S_{n_{1}+n_{2}}$ is given by changing the order of two sub-sequences $\left(
k_{1}+1,\dots ,k_{1}+k_{2}\right) $ and $\left( k_{1}+k_{2}+1,\dots
,n_{1}+k_{2}\right) $ in $\left( 1,\dots ,n_{1}+n_{2}\right) $.
\end{proposition}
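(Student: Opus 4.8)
Write $st:=conc\circ \left( s\otimes t\right) $, which again lies in $G\left( A\right) $ since $G\left( A\right) $ is a group. I would verify the claimed identity coefficient by coefficient: fix a word $w=a_{1}\cdots a_{n_{1}+n_{2}}\in A^{\ast }$ and set $u:=a_{1}\cdots a_{n_{1}}$, $v:=a_{n_{1}+1}\cdots a_{n_{1}+n_{2}}$, so $w=uv$. The first point is the remark that
\[
\widehat{x}\left( 1_{n_{1}}\succ 1_{n_{2}}\right) \left( w\right) =\left( x,u\succ v\right) \qquad \text{for every }x\in K\!\left\langle \!\left\langle A\right\rangle \!\right\rangle ,
\]
which is immediate from the definition of $\widehat{x}$ once one knows $\left( 1_{n_{1}}\succ 1_{n_{2}}\right) \cdot w=u\succ v$; the latter is read off the definition of $\succ $ on $\mathbb{Z}S$ (only the grade-matching part of $\delta ^{\prime }w$ contributes) by the same word-tracking as in the proof of Lemma \ref{Proposition iterated integrals of words}. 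Taking $x=st$ and $x=s$, the left-hand side of the statement evaluated at $w$ equals $\left( st,u\succ v\right) -\left( s,u\succ v\right) $.

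The heart of the argument is the deconcatenation of $u\succ v$. Write $v=v^{\prime }a$ with $a\in A$ (possible since $n_{2}\geq 1$); by definition of $\succ $, $u\succ v=\left( sh\circ \left( u\otimes v^{\prime }\right) \right) a$. Combining the elementary identity $\delta ^{\prime }\left( pa\right) =\left( \mathrm{id}\otimes R_{a}\right) \delta ^{\prime }p+\left( pa\right) \otimes e$ (with $R_{a}$ right concatenation by $a$) with the fact that $\delta ^{\prime }$ is multiplicative for the shuffle product (being the coproduct of the shuffle Hopf algebra), one obtains
\[
\delta ^{\prime }\left( u\succ v\right) =\left( u\succ v\right) \otimes e+\sum_{\substack{ u=u_{1}u_{2} \\ v=v_{1}v_{2},\ v_{2}\neq e}}\left( sh\circ \left( u_{1}\otimes v_{1}\right) \right) \otimes \left( u_{2}\succ v_{2}\right) .
\]
Applying the duality $\left( st,p\right) =\left( s\otimes t,\delta ^{\prime }p\right) $ and using that $s$ is a character of the shuffle algebra (so $\left( s,sh\circ \left( u_{1}\otimes v_{1}\right) \right) =\left( s,u_{1}\right) \left( s,v_{1}\right) $) and that $\left( t,e\right) =1$ (both because $s,t\in G\left( A\right) $), one gets
\[
\left( st,u\succ v\right) -\left( s,u\succ v\right) =\sum_{\substack{ u=u_{1}u_{2} \\ v=v_{1}v_{2},\ v_{2}\neq e}}\left( s,u_{1}\right) \left( s,v_{1}\right) \left( t,u_{2}\succ v_{2}\right) .
\]

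It remains to recognise this sum as the right-hand side evaluated at $w$. Write $P_{k_{1},k_{2}}:=conc\circ \left( \widehat{s}\left( 1_{k_{1}}\ast ^{\prime }1_{k_{2}}\right) \otimes \widehat{t}\left( 1_{n_{1}-k_{1}}\succ 1_{n_{2}-k_{2}}\right) \right) $ and index the sum by $k_{1}:=\left\vert u_{1}\right\vert \in \left\{ 0,\dots ,n_{1}\right\} $ and $k_{2}:=\left\vert v_{1}\right\vert \in \left\{ 0,\dots ,n_{2}-1\right\} $ (the condition $v_{2}\neq e$ forces $k_{2}\leq n_{2}-1$); for each such pair the factorisations $u=u_{1}u_{2}$, $v=v_{1}v_{2}$ are unique. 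Unwinding $\widehat{s}$, $\widehat{t}$, $\ast ^{\prime }$ and $\succ $ exactly as in the first paragraph shows that the coefficient in $P_{k_{1},k_{2}}$ of a word $z_{1}z_{2}z_{3}z_{4}$ with $\left\vert z_{1}\right\vert =k_{1}$, $\left\vert z_{2}\right\vert =k_{2}$, $\left\vert z_{3}\right\vert =n_{1}-k_{1}$, $\left\vert z_{4}\right\vert =n_{2}-k_{2}$ is $\left( s,z_{1}\right) \left( s,z_{2}\right) \left( t,z_{3}\succ z_{4}\right) $. Since $\rho _{k_{2},n_{1}-k_{1}}$ merely interchanges two consecutive blocks, of lengths $k_{2}$ and $n_{1}-k_{1}$, occupying positions $k_{1}+1,\dots ,n_{1}+k_{2}$, the coefficient of $w$ in $\rho _{k_{2},n_{1}-k_{1}}\cdot P_{k_{1},k_{2}}$ equals the coefficient in $P_{k_{1},k_{2}}$ of $\rho _{k_{2},n_{1}-k_{1}}^{-1}\cdot w$; and splitting $\rho _{k_{2},n_{1}-k_{1}}^{-1}\cdot w$ into consecutive blocks of lengths $k_{1},k_{2},n_{1}-k_{1},n_{2}-k_{2}$ yields precisely $u_{1},v_{1},u_{2},v_{2}$. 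Hence the $\left( k_{1},k_{2}\right) $-summand of the right-hand side, evaluated at $w$, equals $\left( s,u_{1}\right) \left( s,v_{1}\right) \left( t,u_{2}\succ v_{2}\right) $; summing over $\left( k_{1},k_{2}\right) $ reproduces the previous display, and since $w$ was arbitrary the identity of formal series follows.

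The one genuinely delicate step I anticipate is this last bookkeeping: one must check carefully that $\rho _{k_{2},n_{1}-k_{1}}$ is precisely the permutation that reconciles the block order $\left( u_{1},v_{1},u_{2},v_{2}\right) $ produced by $\delta ^{\prime }\left( u\succ v\right) $ with the order $\left( u_{1},u_{2},v_{1},v_{2}\right) $ inherited from the factorisation $w=uv$, and that the two ranges of summation coincide. Everything else is a routine application of the shuffle--bialgebra compatibility of $\delta ^{\prime }$, the $conc$--$\delta ^{\prime }$ duality, and the character property of group-like elements recalled in Section \ref{Section hopf algebra of permutations}.
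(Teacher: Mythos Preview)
Your argument is correct and follows essentially the same route as the paper: both compute the coefficient at a fixed word $w=uv$, invoke the deconcatenation identity
\[
\delta^{\prime}(u\succ v)=(u\succ v)\otimes e+\sum_{\substack{u=u_{1}u_{2}\\ v=v_{1}v_{2},\ v_{2}\neq e}}\bigl(sh\circ(u_{1}\otimes v_{1})\bigr)\otimes(u_{2}\succ v_{2}),
\]
and then use the $conc$--$\delta^{\prime}$ duality together with the shuffle-character property of $s$. The only differences are expository: the paper obtains the displayed formula by citing the bidendriform relation from Ronco, whereas you derive it from the Hopf compatibility $\delta^{\prime}\circ sh=(sh\otimes sh)\circ(\mathrm{id}\otimes\tau\otimes\mathrm{id})\circ(\delta^{\prime}\otimes\delta^{\prime})$ together with $\delta^{\prime}(pa)=(\mathrm{id}\otimes R_{a})\delta^{\prime}p+(pa)\otimes e$; and you carry out explicitly the block-permutation bookkeeping with $\rho_{k_{2},n_{1}-k_{1}}$ that the paper leaves to the reader after its final displayed line.
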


\begin{remark}
The equality in Proposition \ref{Proposition expression of iterated
integration of permutations} can be formally expressed as%
\begin{equation*}
\left( \int x^{n_{1}}dx^{n_{2}}\right) \bigg|_{x=s}^{st}=\left( \int \left(
sx\right) ^{n_{1}}d\left( sx\right) ^{n_{2}}\right) \bigg|_{x=1}^{t}
\end{equation*}%
and is a change of variable formula for abstract iterated integration.
\end{remark}

\begin{proof}
For $w\in A^{\ast }$ let $\left\vert w\right\vert $ denote the number of
letters in $w$. Based on the bidendriform algebra structure of shuffle
algebra and Definition 1.2 (b) \cite{ronco2000primitive},%
\begin{equation*}
\delta ^{\prime }\left( u\succ v\right) =\left( u\succ v\right) \otimes
e+\sum_{\left\vert v_{\left( 2\right) }\right\vert \geq 1}\left( u_{\left(
1\right) }\shuffle v_{\left( 1\right) }\right) \otimes \left( u_{\left(
2\right) }\succ v_{\left( 2\right) }\right)
\end{equation*}%
for $u,v\in A^{\ast },\left\vert v\right\vert \geq 1$, where $u_{\left(
1\right) }\otimes u_{\left( 2\right) }:=\delta ^{\prime }\left( u\right) $, $%
v_{\left( 1\right) }\otimes v_{\left( 2\right) }:=\delta ^{\prime }\left(
v\right) $ and $\shuffle$ denotes the shuffle product. Then based on the
definition $1_{n_{1}}\succ 1_{n_{2}}:=\succ \circ \left( 1_{n_{1}}\otimes
1_{n_{2}}\right) \circ \delta ^{\prime }$, for $w\in A^{\ast
},w=uv,\left\vert u\right\vert =n_{1},\left\vert v\right\vert =n_{2}$,
\begin{eqnarray*}
&&\left( conc\circ \left( s\otimes t\right) ,\left( 1_{n_{1}}\succ
1_{n_{2}}\right) \cdot w\right) \\
&=&\left( conc\circ \left( s\otimes t\right) ,u\succ v\right) \\
&=&\left( s\otimes t,\delta ^{\prime }\left( u\succ v\right) \right) \text{
(duality between }conc\text{ and }\delta ^{\prime }\text{)} \\
&=&\left( s,u\succ v\right) +\sum_{\left\vert v_{\left( 2\right)
}\right\vert \geq 1}\left( s,u_{\left( 1\right) }\shuffle v_{\left( 1\right)
}\right) \left( t,u_{\left( 2\right) }\succ v_{\left( 2\right) }\right) \\
&=&\left( s,\left( 1_{n_{1}}\succ 1_{n_{2}}\right) \cdot w\right) \\
&&+\sum_{\left\vert v_{\left( 2\right) }\right\vert \geq 1}\left( s,\left(
1_{\left\vert u_{\left( 1\right) }\right\vert }\ast ^{\prime }1_{\left\vert
v_{\left( 1\right) }\right\vert }\right) \cdot \left( u_{\left( 1\right)
}v_{\left( 1\right) }\right) \right) \left( t,\left( 1_{\left\vert u_{\left(
2\right) }\right\vert }\succ 1_{\left\vert v_{\left( 2\right) }\right\vert
}\right) \cdot \left( u_{\left( 2\right) }v_{\left( 2\right) }\right) \right)
\end{eqnarray*}%
for $s,t\in G\left( A\right) $.
\end{proof}

Although expressed in terms of shuffles/permutations, core arguments in this
section (in particular Proposition \ref{Proposition expression of iterated
integration of permutations}) can be applied to a general bidendriform
algebra.

\section{Integration of geometric rough paths}

We first consider an example that is simple and important.

For two Banach spaces $V$ and $U$, let $L\left( V,U\right) $ denote the set
of continuous linear mappings from $V$ to $U$. Consider a polynomial
one-form $p:V\rightarrow L\left( V,U\right) $ that is a polynomial taking
values in $L\left( V,U\right) $. For $v,w,v_{0}\in V$,
\begin{equation*}
p\left( v\right) \left( w\right) =\sum_{k=0}^{n}\left( D^{k}p\right) \left(
v_{0}\right) \frac{\left( v-v_{0}\right) ^{\otimes k}}{k!}\left( w\right)
\text{,}
\end{equation*}%
where $p\left( v\right) \in L\left( V,U\right) $ and $p\left( v\right)
\left( w\right) \in U$. The value of $p\left( v\right) \left( w\right) $
does not depend on $v_{0}$.

For a time interval $\left[ S,T\right] $ and Banach space $V$, let $BV\left( %
\left[ S,T\right] ,V\right) $ denote the set of continuous bounded variation
paths $\left[ S,T\right] \rightarrow V$.

Let $x\in BV\left( \left[ S,T\right] ,V\right) $ satisfy $x_{S}=0$. Then%
\begin{eqnarray*}
&&\int_{r=S}^{T}p\left( x_{r}\right) dx_{r} \\
&=&\sum_{k=0}^{n}\left( D^{k}p\right) \left( 0\right) \int_{r=S}^{T}\frac{%
\left( x_{r}\right) ^{\otimes k}}{k!}\otimes dx_{r} \\
&=&\sum_{k=0}^{n}\left( D^{k}p\right) \left( 0\right) \int_{S<u_{1}<\cdots
<u_{k+1}<T}dx_{u_{1}}\otimes \cdots \otimes dx_{u_{k+1}} \\
&=&:\sum_{k=0}^{n}\left( D^{k}p\right) \left( 0\right) X_{S,T}^{k+1}
\end{eqnarray*}%
where the first equality is the Taylor expansion of $p$ and the second
equality is based on the symmetry of $D^{k}p$. As a result, the classical
integral $\int p\left( x\right) dx$ is expressed as a finite linear
combination of iterated integrals of $x$.

Let $G\left( V\right) $ denote the set of group-like elements in the tensor
algebra $T\left( V\right) $. Based on Chen \cite{chen1954iterated}, the
signature of continuous bounded variation paths form a subgroup of $G\left(
V\right) $.

\begin{notation}
For a polynomial one-form $p:V\rightarrow L\left( V,U\right) $ of degree $n$%
, define $f_{p}:G\left( V\right) \rightarrow U$ as%
\begin{equation}
f_{p}\left( g\right) :=\sum_{k=0}^{n}\left( D^{k}p\right) \left( 0\right)
g^{k+1}  \label{definition of fp}
\end{equation}%
where$\ g=\sum_{k\geq 0}g^{k}$ with $g^{k}\in V^{\otimes k}$.
\end{notation}

A polynomial one-form $p:V\rightarrow L\left( V,U\right) $ can be lifted to
an exact one-form $df_{p}$ for $f_{p}:G\left( V\right) \rightarrow U$
defined at $\left( \ref{definition of fp}\right) $. Indeed, for $x\in
BV\left( \left[ S,T\right] ,V\right) $, if denote $X_{t}:=\exp \left(
x_{S}\right) S\left( x|_{\left[ S,t\right] }\right) $ for each $t$, then
\begin{equation*}
\int_{r=S}^{T}p\left( x_{r}\right) dx_{r}=f_{p}\left( X_{T}\right)
-f_{p}\left( X_{S}\right) \text{.}
\end{equation*}%
When $x_{S}=0$, $f\left( X_{S}\right) =0$ and the equality holds based on
the calculation above. When $x_{S}\neq 0$, by connecting $0$ and $x_{S}$
with a straight line and using the additive property of integrals, the
equality $\int_{r=S}^{T}p\left( x_{r}\right) dx_{r}=f_{p}\left( X_{T}\right)
-f_{p}\left( X_{S}\right) $ still holds. Then based on the fundamental
theorem of calculus and the change of variable formula
\begin{equation*}
f_{p}\left( X_{T}\right) -f_{p}\left( X_{S}\right)
=\int_{X_{S}}^{X_{T}}df_{p}=\int_{r=S}^{T}df_{p}dX_{r}.
\end{equation*}%
As a result, the polynomial one-form $p:V\rightarrow L\left( V,U\right) $ is
lifted to an exact one-form $df_{p}$ for $f_{p}:G\left( V\right) \rightarrow
U$ such that%
\begin{equation*}
\int_{r=S}^{T}p\left( x_{r}\right) dx_{r}=\int_{r=S}^{T}df_{p}dX_{r}
\end{equation*}%
for each $x\in BV\left( \left[ S,T\right] ,V\right) $, where $X_{t}:=\exp
\left( x_{S}\right) S\left( x|_{\left[ S,t\right] }\right) $.

The lifting of a path to a rough path simplifies the formulation of
integration, and is necessary. A rough path can be viewed as a basis of
controlled systems, and integration/differential equation can be viewed as a
transformation between bases of controlled systems. The metric on rough path
space can be considerably weaker ($p$-variation, $p<\infty $) than the
metric needed to define iterated integrals ($p$-variation, $p<2$). When the
metric is weaker than $p<2$, the basis systems $1,\dots ,\left[ p\right] $
are selected to postulate iterated integrals and satisfy an abstract
`integration by parts formula' (defines a character of shuffle algebra for
each fixed time). The algebraic structure is important to interpret the
limit behavior of controlled systems. For example, physical Brownian motion
in a magnetic field can be described by Brownian motion with a
`non-canonical' L\'{e}vy area \cite{friz2015physical}.

Consider the lifting of the classical integral $\int p\left( x\right) dx$ to
$f_{p}:G\left( V\right) \rightarrow U$. The function $f_{p}$ takes values in
the Banach space $U$ same as the integral $\int p\left( x\right) dx$. The
full rough integral is a mapping between paths in groups, and we need to
lift $f_{p}:G\left( V\right) \rightarrow U$ to a function $F_{p}:G\left(
V\right) \rightarrow G\left( U\right) $ such that%
\begin{equation*}
S\left( \int_{r=S}^{\cdot }p\left( x_{r}\right) dx_{r}\right) =F_{p}\left(
X_{S}\right) ^{-1}F_{p}\left( X_{T}\right)
\end{equation*}%
for each $x\in BV\left( \left[ S,T\right] ,V\right) $, where $%
\int_{r=S}^{\cdot }p\left( x_{r}\right) dx_{r}\in BV\left( \left[ S,T\right]
,U\right) $ denotes the integral path and $X_{t}:=\exp \left( x_{S}\right)
S\left( x|_{\left[ S,t\right] }\right) $. Lyons defined the lift of $f_{p}$
to $F_{p}$ p.285 Definition 3.2.2 \cite{lyons1998differential}. The lift can
also be interpreted as iterated integrals of controlled paths p.101 Theorem
1 Gubinelli \cite{gubinelli2004controlling}.

We interpret the lifting of $f_{p}$ to $F_{p}$ in the language of the
Malvenuto--Reutenauer Hopf algebra of permutations (denoted by MR) \cite%
{malvenuto1994produits, malvenuto1995duality}. Let $S_{n}$ denote the
symmetric group of order $n$ for $n\geq 1$, and $S_{0}=\left\{ \lambda
\right\} $. MR is a Hopf algebra on $%
\mathbb{Z}
S$ with\ $S:=\cup _{n=0}^{\infty }S_{n}$. The product on MR $\ast ^{\prime }:%
\mathbb{Z}
S\times
\mathbb{Z}
S\rightarrow
\mathbb{Z}
S$ is the $%
\mathbb{Z}
$-bilinear map given by%
\begin{equation*}
\sigma \ast ^{\prime }\rho :=\sigma \shuffle\bar{\rho}
\end{equation*}%
for $\sigma \in S_{n}$ and $\rho \in S_{m}$, where permutations are
considered as words with $\bar{\rho}\left( i\right) :=n+\rho \left( i\right)
$ and $\shuffle$ denotes the shuffle product. For example,
\begin{equation*}
\left( 1\right) \ast ^{\prime }\left( 21\right) =1\shuffle32=\left(
132\right) +\left( 312\right) +\left( 321\right) .
\end{equation*}

Based on Ronco \cite{ronco2000primitive}, there is a natural operation $%
\succ :\left(
\mathbb{Z}
S\times
\mathbb{Z}
S\right) \backslash \left(
\mathbb{Z}
S_{0}\times
\mathbb{Z}
S_{0}\right) \rightarrow
\mathbb{Z}
S$ that is the $%
\mathbb{Z}
$-bilinear map given by%
\begin{equation*}
\sigma \succ \rho :=\left( \sigma \shuffle\bar{\rho}\left( \left[ m-1\right]
\right) \right) \bar{\rho}\left( m\right)
\end{equation*}%
for $\sigma \in S_{n},\rho \in S_{m},m\geq 1$, where permutations are
considered as words with $\bar{\rho}\left( i\right) :=n+\rho \left( i\right)
$, $\bar{\rho}\left( \left[ m-1\right] \right) $ denotes the word consisting
the first $m-1$ letters in $\bar{\rho}$, and $wi$ denotes the concatenation
of $w$ with $i$. For example,%
\begin{equation*}
\left( 1\right) \succ \left( 312\right) =\left( 1\shuffle42\right) 3=\left(
1423\right) +\left( 4123\right) +\left( 4213\right) .
\end{equation*}%
Set $\sigma \succ \lambda :=0\in
\mathbb{Z}
$ for $\sigma \in S_{n},n\geq 1,\lambda \in S_{0}$.

The operation $\succ $ can be viewed as an abstract iterated integration.
For $\rho _{i}\in
\mathbb{Z}
S,i=1,\dots ,n$, denote p.252 Notation 2.3 \cite{ronco2000primitive}
\begin{equation*}
m_{\succ }\left( \rho _{1},\cdots ,\rho _{n}\right) :=\left( \cdots \left(
\left( \rho _{1}\succ \rho _{2}\right) \succ \rho _{3}\right) \cdots \right)
\succ \rho _{n}\text{.}
\end{equation*}

For a (possibly infinite) set $A$ and a commutative $%
\mathbb{Q}
$-algebra $K$, let $K\!\left\langle \!\left\langle A\right\rangle
\!\right\rangle $ resp. $K\!\left\langle A\right\rangle $ denote the ring of
non-commutative formal series resp. polynomials on $A$ over $K$. There is a
bilinear action of $%
\mathbb{Z}
S$ on $K\!\left\langle A\right\rangle $ given by%
\begin{equation*}
\sigma \cdot a_{1}\cdots a_{n}:=a_{\sigma 1}\cdots a_{\sigma n}
\end{equation*}%
for $a_{i}\in A,\sigma \in S_{n}$. Let $A^{\ast }$ denote the free monoid
generated by $A$ that is the set of sequences $a_{1}\cdots a_{n}$ of
elements in $A$ including the empty sequence $e$ with the operation of
concatenation. Let $G\left( A\right) $ denote the set of group-like elements
in $K\!\left\langle \!\left\langle A\right\rangle \!\right\rangle $ that is
the set of characters of the shuffle algebra on $K\!\left\langle
A\right\rangle $: $s\in G\left( A\right) $ iff $\left( s,p\shuffle q\right)
=\left( s,p\right) \left( s,q\right) $ for $p,q\in K\!\left\langle
A\right\rangle $. For $s\in G\left( A\right) $, define $\widehat{s}:%
\mathbb{Z}
S\rightarrow K\!\left\langle \!\left\langle A\right\rangle \!\right\rangle $
as%
\begin{equation}
\widehat{s}\left( \sigma \right) :=\sum_{w\in A^{\ast }}\left( s,\sigma
\cdot w\right) w  \label{Definition of s hat}
\end{equation}%
for $\sigma \in
\mathbb{Z}
S$. Based on Proposition \ref{Proposition free Lie group is a group of
characters},
\begin{equation*}
conc\circ \left( \widehat{s}\left( \sigma \right) \otimes \widehat{s}\left(
\rho \right) \right) =\widehat{s}\left( \sigma \ast ^{\prime }\rho \right)
\end{equation*}%
for $\sigma ,\rho \in
\mathbb{Z}
S$, where $conc$ denotes the concatenation product.

Let $V$ and $U$ be two Banach spaces. For a polynomial one-form $%
p:V\rightarrow L\left( V,U\right) $ of degree $n$, let $\left( D^{k}p\right)
\left( 0\right) \in L\left( V^{\otimes \left( k+1\right) },U\right) $ denote
the $k$th derivative of $p$ evaluated at $0\in V$. Let $1_{k}$ denote the
identity element in $S_{k}$, $k\geq 1$. For $l=1,2,\dots $, denote%
\begin{equation*}
\sigma _{l}:=\sum_{\substack{ k_{i}=0,\dots ,n  \\ i=1,\dots ,l}}\left(
D^{k_{1}}p\right) \left( 0\right) \otimes \cdots \otimes \left(
D^{k_{l}}p\right) \left( 0\right) m_{\succ }\left( 1_{k_{1}+1},\cdots
,1_{k_{l}+1}\right) \text{.}
\end{equation*}%
Let $G\left( V\right) $ be the set of group-like elements in the tensor
algebra $T\left( V\right) $. For simplicity, we assume that $V$ has a
(possibly infinite) basis given by a set $A$, and let $G(V)$ be the set of
group-like elements in $K\!\left\langle \!\left\langle A\right\rangle
\!\right\rangle $.

\begin{notation}
For a polynomial one-form $p:V\rightarrow L\left( V,U\right) $ of degree $n$%
, define $F_{p}:G\left( V\right) \rightarrow T\left( U\right) $ as%
\begin{equation}
F_{p}\left( s\right) :=1+\sum_{l=1}^{\infty }\widetilde{s}\left( \sigma
_{l}\right) ,s\in G\left( V\right) ,  \label{Definition of F}
\end{equation}%
where $\widetilde{s}\left( \sigma _{l}\right) \in U^{\otimes l}$ is given by
\begin{equation*}
\widetilde{s}\left( \sigma _{l}\right) :=\sum_{\substack{ k_{i}=0,\dots ,n
\\ i=1,\dots ,l}}\left( D^{k_{1}}p\right) \left( 0\right) \otimes \cdots
\otimes \left( D^{k_{l}}p\right) \left( 0\right) \widehat{s}\left( m_{\succ
}\left( 1_{k_{1}+1},\cdots ,1_{k_{l}+1}\right) \right)
\end{equation*}%
with $\widehat{s}$ defined at $\left( \ref{Definition of s hat}\right) $.
\end{notation}

The following Proposition proves that $F_{p}$ takes values in group-like
elements in $T\left( U\right) $ (denoted by $G\left( U\right) $). The result
helps to prove that the indefinite integral of a polynomial one-form along a
geometric rough path is again a geometric rough path.

\begin{proposition}
\label{Proposition F takes values in group}$F_{p}:G\left( V\right)
\rightarrow T\left( U\right) \ $is a lift of $f_{p}:V\rightarrow U$, and $%
F_{p}$ takes values in $G\left( U\right) $ the group-like elements in $%
T\left( U\right) $.
\end{proposition}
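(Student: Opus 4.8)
The plan is to verify the two assertions separately: first that $F_p$ is a lift of $f_p$, and then that $F_p$ lands in $G(U)$. The second part is the substantive one and will occupy most of the argument.

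For the lift property, I would observe that $f_p(s) = \sum_{k=0}^n (D^kp)(0)\,s^{k+1}$ can be read off as the $l=1$ term of $F_p(s)$: indeed $\sigma_1 = \sum_{k=0}^n (D^kp)(0)\, 1_{k+1}$ and $\widehat{s}(1_{k+1}) = \sum_{w\in A^\ast}(s,w)w = \pi_{k+1}(s) = s^{k+1}$, where $\pi_{k+1}$ is the projection onto $V^{\otimes(k+1)}$, since $1_{k+1}$ acts trivially on words of length $k+1$ and sends words of other lengths to themselves without altering their contribution. So $\pi_1(F_p(s)) = 1$ and $\pi_0$-truncation aside, the $U$-component of $F_p(s)$ (the degree-one piece in $T(U)$) equals $f_p(s)$. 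This is essentially a bookkeeping check.

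For the main claim, the strategy is to show $F_p$ is a homomorphism in the sense that $F_p(s t) = F_p(s)\, F_p(t)$ in the concatenation algebra whenever $s,t\in G(V)$ with $st$ again group-like (concatenation of group-like elements), and then invoke the characterisation of $G(U)$ as the group-like, equivalently exponential-of-Lie-series, elements — but in fact the cleanest route is to show directly that $F_p(s)$ is a character of the shuffle algebra on $K\langle A_U\rangle$, i.e. $\bigl(F_p(s), P\shuffle Q\bigr) = (F_p(s),P)(F_p(s),Q)$. For this I would use Lemma~\ref{Proposition iterated integrals of words}: the shuffle product of $m_\succ(1_{k_1+1},\dots,1_{k_l+1})$ with $m_\succ(1_{k_{l+1}+1},\dots,1_{k_{l+l'}+1})$ decomposes as $\sum_{\rho\in 1_l\ast'1_{l'}} m_\succ$ of the shuffled sequence. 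Applying $\widehat{s}$ and using that $\widehat{s}$ is multiplicative for $\ast'$ (Proposition~\ref{Proposition free Lie group is a group of characters}) — wait, one needs the analogue for $m_\succ$, which follows from Lemma~\ref{Proposition iterated integrals of words} together with the fact that $\widehat s$ intertwines $sh$ and the shuffle on $K\langle\!\langle A\rangle\!\rangle$ because $s$ is a shuffle character. Summing over all multi-indices $k_i$ and contracting with the tensors $(D^{k_i}p)(0)$ then matches the product $\widetilde{s}(\sigma_l)\otimes\widetilde{s}(\sigma_{l'})$ with the degree-$(l+l')$ part of $F_p(s)$, which is exactly the statement that $F_p(s)$ is group-like.

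The hard part will be the combinatorial matching in the last step: getting the sum over $\rho\in 1_l\ast' 1_{l'}$ of $m_\succ$-terms produced by Lemma~\ref{Proposition iterated integrals of words} to line up, after contraction with the derivative tensors, with the coproduct condition $\delta(F_p(s)) = F_p(s)\otimes F_p(s)$ — in particular one must check that the symmetry of $(D^{k}p)(0)$ (used earlier to pass from ordered-simplex iterated integrals to the tensor $s^{k+1}$) is compatible with reindexing the shuffles of the blocks of sizes $k_i+1$, and that no cross terms between different blocks survive incorrectly. I would handle this by reducing to the case $s=t$ irrelevant — rather, by testing against words $w\in A_U^\ast$ and carefully tracking which shuffles of the underlying $V$-words contribute; the bidendriform identities for $\succ$ quoted in the proof of Proposition~\ref{Proposition expression of iterated integration of permutations} are the technical engine. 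Once the shuffle-character property is established, membership in $G(U)$ is immediate from the cited Theorem 3.2 of \cite{reutenauer2003free}.
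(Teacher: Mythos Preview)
Your proposal is correct and follows essentially the same route as the paper: the lift property is the $l=1$ check, and group-likeness is obtained by combining Lemma~\ref{Proposition iterated integrals of words} (shuffle of two $m_\succ$-blocks decomposes as a sum over block-shuffles) with Proposition~\ref{Proposition free Lie group is a group of characters} ($\widehat{s}$ turns $\ast'$ into concatenation), exactly as the paper does. One clarification: your worry about the symmetry of $(D^kp)(0)$ is a red herring here --- the combinatorial matching works purely at the level of the formal sum over all multi-indices $(k_1,\dots,k_{l+l'})$, where reindexing by $\rho\in 1_l\ast'1_{l'}$ simply permutes the summation variables, so no symmetry of the derivative tensors is invoked in this step.
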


\begin{proof}
$F_{p}$ is a lift of $f_{p}$ because $f_{p}\left( s\right) =\widetilde{s}%
\left( \sigma _{1}\right) ,s\in G\left( V\right) $. For $\rho \in S_{j}$ and
$n_{1},\dots ,n_{j}\geq 1$, denote
\begin{equation*}
\rho \cdot m_{\succ }\left( 1_{n_{1}},\cdots ,1_{n_{j}}\right) :=m_{\succ
}\left( \overline{1_{n_{\rho \left( 1\right) }}},\cdots ,\overline{%
1_{n_{\rho \left( j\right) }}}\right)
\end{equation*}%
with $\overline{1_{n_{1}}}\left( i\right) :=i,\overline{1_{n_{l+1}}}\left(
i\right) :=\sum_{r=1}^{l}n_{r}+i$. Based on Lemma \ref{Proposition iterated
integrals of words} and Proposition \ref{Proposition free Lie group is a
group of characters}, for $s\in G\left( V\right) $, $\widehat{s}$ defined at
$\left( \ref{Definition of s hat}\right) $ satisfies%
\begin{eqnarray*}
&&conc\circ \left( \widehat{s}\left( m_{\succ }\left( 1_{n_{1}},\cdots
,1_{n_{k}}\right) \right) \otimes \widehat{s}\left( m_{\succ }\left(
1_{n_{k+1}},\cdots ,1_{n_{k+j}}\right) \right) \right) \\
&=&\widehat{s}\left( m_{\succ }\left( 1_{n_{1}},\cdots ,1_{n_{k}}\right)
\ast ^{\prime }m_{\succ }\left( 1_{n_{k+1}},\cdots ,1_{n_{k+j}}\right)
\right) \\
&=&\widehat{s}\left( \left( 1_{k}\ast ^{\prime }1_{j}\right) \cdot m_{\succ
}\left( \overline{1_{n_{1}}},\cdots ,\overline{1_{n_{k+j}}}\right) \right) .
\end{eqnarray*}%
Then based on the group structure of $\left\{ S_{k}\right\} _{k}$,%
\begin{equation*}
conc\circ \left( \widetilde{s}\left( \alpha \cdot \sigma _{k}\right) \otimes
\widetilde{s}\left( \rho \cdot \sigma _{j}\right) \right) =\widetilde{s}%
\left( \left( \alpha \ast ^{\prime }\rho \right) \cdot \sigma _{k+j}\right)
\end{equation*}%
for $\alpha \in S_{k},\rho \in S_{j},k\geq 1,j\geq 1$, where
\begin{equation*}
\widetilde{s}\left( \rho \cdot \sigma _{j}\right) :=\sum_{\substack{ %
k_{i}=0,\dots ,n  \\ i=1,\dots ,j}}\left( D^{k_{1}}p\right) \left( 0\right)
\otimes \cdots \otimes \left( D^{k_{j}}p\right) \left( 0\right) \widehat{s}%
\left( \rho \cdot m_{\succ }\left( 1_{n_{k+1}},\cdots ,1_{n_{k+j}}\right)
\right) .
\end{equation*}
\end{proof}

A polynomial one-form $p:V\rightarrow L\left( V,U\right) $ can be lifted to
an exact one-form $dF_{p}$ for $F_{p}:G\left( V\right) \rightarrow G\left(
U\right) $ defined at $\left( \ref{Definition of F}\right) $. Indeed, for $%
x\in BV\left( \left[ S,T\right] ,V\right) $, denote $X_{t}:=\exp \left(
x_{S}\right) S\left( x|_{\left[ S,t\right] }\right) $ for each $t$. Define $%
y\in BV\left( \left[ S,T\right] ,U\right) $ by $y_{t}:=\int_{r=S}^{t}p\left(
x_{r}\right) dx_{r}$, and define $Y_{t}:=S\left( y|_{\left[ S,t\right]
}\right) $. Then
\begin{equation}
Y_{S}^{-1}Y_{T}=F_{p}\left( X_{S}\right) ^{-1}F_{p}\left( X_{T}\right)
=\int_{r=S}^{T}dF_{p}dX_{r}.
\label{equality representation of almost multiplicative}
\end{equation}%
When $x_{S}=0$, $F_{p}\left( X_{S}\right) =1$ and the equality holds. When $%
x_{S}\neq 0$, by connecting $0$ and $x_{S}$ with a straight line and using
Chen's identity, the equality $Y_{S}^{-1}Y_{T}=F_{p}\left( X_{S}\right)
^{-1}\!F_{p}\left( X_{T}\right) $ still holds. The lift of $p$ to $dF_{p}$
does not depend on the path $x$.

For a general geometric rough path $X:\left[ S,T\right] \rightarrow G\left(
V\right) $, the integral of $p$ along $X$ can be defined as%
\begin{equation*}
\int_{r=S}^{T}p\left( x_{r}\right) dX_{r}:=F_{p}\left( X_{S}\right)
^{-1}F_{p}\left( X_{T}\right) =:\int_{r=S}^{T}dF_{p}dX_{r}\text{,}
\end{equation*}%
where $x$ denotes the projection of $X$ to a path in $V$. This integration
provides another interpretation of the almost multiplicative functional
defined by Lyons in Definition 3.2.2 \cite{lyons1998differential}.

The integration of polynomial one-forms provides basic ingredients for the
integration of general regular one-forms. Polynomials have nice
approximative properties, and classically the smoothness of a function is
expressed in terms of polynomials. Based on Stein (Chapter VI \cite%
{stein1970singular}), a function $\theta $ on a closed subset $F\subseteq
\mathbb{R}
^{n}$ is $\limfunc{Lip}\left( \gamma \right) $ for some $\gamma \in (k,k+1]$
if there exists a family of functions $\theta ^{j}$, $j=0,\dots ,k$, with $%
\theta ^{0}=\theta $, so that if%
\begin{equation*}
\theta ^{j}\left( x\right) =\sum_{\left\vert j+l\right\vert \leq k}\frac{%
\theta ^{j+l}\left( y\right) }{l!}\left( x-y\right) ^{l}+R_{j}\left(
x,y\right)
\end{equation*}%
then $\left\vert \theta ^{j}\left( x\right) \right\vert \leq M$ and $%
\left\vert R_{j}\left( x,y\right) \right\vert \leq M\left\vert
x-y\right\vert ^{\gamma -\left\vert j\right\vert }$ for all $x,y\in
F,\left\vert j\right\vert \leq k$. Based on Lyons \cite%
{lyons1998differential} Lipschitz one-forms are Lipschitz functions in the
sense of Stein, taking values in continuous linear mappings.

The following is Definition 3.2.2 and Theorem 3.2.1 p.285 \cite%
{lyons1998differential}.

\begin{definition}[Lyons]
\label{Definition almost multiplicative function Y}For any multiplicative
functional $X_{s,t}$ in $\Omega G\left( V\right) ^{p}$ define%
\begin{equation*}
Y_{s,t}^{i}=\sum_{l_{1},\dots ,l_{i}=1}^{\left[ p\right] }\theta
^{l_{1}}\left( x_{s}\right) \otimes \cdots \otimes \theta ^{l_{i}}\left(
x_{s}\right) \sum_{\pi \in \Pi _{\underline{l}}}\pi \left(
X_{s,t}^{\left\Vert \underline{l}\right\Vert }\right)
\end{equation*}
\end{definition}

\begin{theorem}[Lyons, Existence of Integral]
\label{Theorem existence of the integration Lyons}For any multiplicative
functional $X_{s,t}$ in $\Omega G\left( V\right) ^{p}$ and any one-form $%
\theta \in \limfunc{Lip}[\gamma -1,\{X_{u},u\in \left[ s,t\right] \}]$ with $%
\gamma >p$ the sequence $Y_{s,t}=(1,Y_{s,t}^{1},\dots ,Y_{s,t}^{[p]})$
defined above is almost multiplicative and of finite $p$-variation; if $%
X_{s,t}$ is controlled by $\omega $ on $J$ where $\omega $ is bounded by $L$%
, and the $\limfunc{Lip}[\gamma -1]$ norm of $\theta $ is bounded by $M$,
then the almost multiplicative and $p$-variation properties of $Y$ are
controlled by multiples of $\omega $ which depend only on $\gamma ,p,L,M$.
\end{theorem}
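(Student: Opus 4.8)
The plan is to recognise Lyons' functional as an evaluation of the group-valued lift constructed in the previous section. Concretely, I would let $p_{s}$ be the degree-$([p]-1)$ Taylor polynomial one-form of $\theta $ based at $x_{s}$, so that $\left( D^{k}p_{s}\right) \left( 0\right) =\theta ^{k+1}\left( x_{s}\right) $ for $k=0,\dots ,[p]-1$, and then check that
\begin{equation*}
Y_{s,t}=F_{p_{s}}\left( X_{s,t}\right) \text{ truncated at level }[p],
\end{equation*}
with $F_{p_{s}}$ the map of $\left( \ref{Definition of F}\right) $. This is exactly the statement that the ordered shuffle $\Pi _{\underline{l}}$ in Definition \ref{Definition almost multiplicative function Y} equals the permutation $m_{\succ }\left( 1_{l_{1}},\dots ,1_{l_{i}}\right) $, so that $\sum_{\pi \in \Pi _{\underline{l}}}\pi \left( X_{s,t}^{\left\Vert \underline{l}\right\Vert }\right) =\widehat{X_{s,t}}\left( m_{\succ }\left( 1_{l_{1}},\dots ,1_{l_{i}}\right) \right) $ once the left and right actions of $\mathbb{Z}S$ are matched. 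Proposition \ref{Proposition F takes values in group} then immediately gives that each $Y_{s,t}$ is group-like in the level-$[p]$ truncation of $T\left( U\right) $, which is one half of ``almost multiplicative''; and inserting $\left\Vert \theta ^{l}\left( x_{s}\right) \right\Vert \leq M$, $\left\Vert \pi \left( X_{s,t}^{m}\right) \right\Vert =\left\Vert X_{s,t}^{m}\right\Vert \leq \omega \left( s,t\right) ^{m/p}$ (permutation invariance of the admissible norm), and $\left\Vert \underline{l}\right\Vert \geq i$ into the explicit formula yields $\left\Vert Y_{s,t}^{i}\right\Vert \leq C\left( p,L\right) M^{i}\omega \left( s,t\right) ^{i/p}$ for $i\leq [p]$ (using $\omega \leq L$ to absorb the finitely many higher powers of $\omega $), which is the finite $p$-variation of $Y$ with constants depending only on $p,L,M$.

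For the multiplicativity defect I would use the change-of-variable identity for abstract iterated integration in the form
\begin{equation*}
F_{p_{s}}\left( X_{s,t}X_{t,u}\right) =F_{p_{s}}\left( X_{s,t}\right) F_{\widetilde{p}_{s}}\left( X_{t,u}\right) ,\qquad \widetilde{p}_{s}\left( w\right) :=p_{s}\left( \left( x_{t}-x_{s}\right) +w\right) ,
\end{equation*}
where $\widetilde{p}_{s}$ is $p_{s}$ re-expanded at the increment $x_{t}-x_{s}=\pi _{1}\left( X_{s,t}\right) $, again a polynomial one-form of degree $\leq [p]-1$, and the product on the right is the tensor-algebra (concatenation) product. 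Since $X_{s,u}=X_{s,t}X_{t,u}$ by multiplicativity of $X$, the left side is $Y_{s,u}$; as $Y_{s,t}\otimes Y_{t,u}=F_{p_{s}}\left( X_{s,t}\right) F_{p_{t}}\left( X_{t,u}\right) $, we get
\begin{equation*}
Y_{s,t}\otimes Y_{t,u}-Y_{s,u}=F_{p_{s}}\left( X_{s,t}\right) \left( F_{p_{t}}\left( X_{t,u}\right) -F_{\widetilde{p}_{s}}\left( X_{t,u}\right) \right)
\end{equation*}
in the level-$[p]$ truncation. The first factor is bounded by $C\left( p,L,M\right) $, so it remains to estimate $F_{p_{t}}\left( X_{t,u}\right) -F_{\widetilde{p}_{s}}\left( X_{t,u}\right) $; here the $\limfunc{Lip}\left( \gamma -1\right) $ property of $\theta $ (in the sense of Stein) enters. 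Comparing Taylor data, $\left( D^{j}\widetilde{p}_{s}\right) \left( 0\right) =\left( D^{j}p_{s}\right) \left( x_{t}-x_{s}\right) $ is the $j$-th Stein component at $x_{s}$ of the jet truncated at degree $[p]-1$, evaluated at $x_{t}$; subtracting the Stein expansion of $\theta ^{j+1}\left( x_{t}\right) $ leaves the omitted jet terms of degree $>[p]-1$ together with the Stein remainder $R_{j}$, giving
\begin{equation*}
\left\Vert \left( D^{j}p_{t}\right) \left( 0\right) -\left( D^{j}\widetilde{p}_{s}\right) \left( 0\right) \right\Vert \leq CM\left\vert x_{t}-x_{s}\right\vert ^{\varepsilon _{j}},\qquad \varepsilon _{j}:=\min \left( [p],\gamma -1\right) -j>p-1-j.
\end{equation*}
Telescoping one tensor factor at a time in the multilinear expression for $F$, and using $\left\vert x_{t}-x_{s}\right\vert \leq \omega \left( s,t\right) ^{1/p}$ together with $\left\Vert \widehat{X_{t,u}}\left( m_{\succ }\left( 1_{j_{1}+1},\dots ,1_{j_{i}+1}\right) \right) \right\Vert \leq C\left( p\right) \omega \left( t,u\right) ^{\left( \sum _{l}\left( j_{l}+1\right) \right) /p}$, each resulting term is $\leq C\left( p,L\right) M^{i}\omega \left( s,u\right) ^{\left( \varepsilon _{j_{r}}+\sum _{l}\left( j_{l}+1\right) \right) /p}$; since $\varepsilon _{j_{r}}+\left( j_{r}+1\right) >p$ the exponent exceeds $1$, and summing the finitely many terms gives $\left\Vert Y_{s,t}\otimes Y_{t,u}-Y_{s,u}\right\Vert \leq C\left( \gamma ,p,L,M\right) \omega \left( s,u\right) ^{\theta _{0}}$ with $\theta _{0}=\min \left( [p]+1,\gamma \right) /p>1$. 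Together with the $p$-variation bound this is the assertion, all constants being functions of $\gamma ,p,L,M$.

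The step I expect to be the real work is the change-of-variable identity $F_{p_{s}}\left( X_{s,t}X_{t,u}\right) =F_{p_{s}}\left( X_{s,t}\right) F_{\widetilde{p}_{s}}\left( X_{t,u}\right) $ for arbitrary group-like $X_{s,t},X_{t,u}$. For signatures of bounded-variation paths it is the classical computation: split the path at $t$, translate the second arc by $-x_{t}$, substitute into $p_{s}$, and use $S\left( \int p_{s}\left( z\right) dz\right) =F_{p_{s}}\left( S\left( z\right) \right) $ together with $S\left( a\ast b\right) =S\left( a\right) S\left( b\right) $. One then passes to a general geometric rough path either by density of such signatures in $G\left( V\right) $ in the metric $d$ and the level-by-level continuity of $F_{p_{s}}$ and of the concatenation product, or---the route this paper's machinery is built for---by iterating Proposition \ref{Proposition expression of iterated integration of permutations}, which expresses $\widehat{conc\circ \left( s\otimes t\right) }$ on $m_{\succ }$-permutations in terms of $\widehat{s}$ and $\widehat{t}$ at shifted exponents; the bookkeeping of the permutations $\rho _{k_{2},n_{1}-k_{1}}$ and of the interplay between $\ast ^{\prime }$ and $\succ $ is the genuinely combinatorial point. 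Everything else---the Stein estimates, the multilinear telescoping, and tracking the dependence of the constants on $\gamma ,p,L,M$---is routine.
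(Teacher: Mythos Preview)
The paper does not give its own proof of this statement: Theorem \ref{Theorem existence of the integration Lyons} is quoted verbatim from Lyons \cite{lyons1998differential} (Theorem 3.2.1, p.285) as background, with no argument supplied here. So there is no ``paper's proof'' to compare your proposal against.

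That said, your proposal is not a reconstruction of Lyons' original argument but an attempt to re-derive the result from the algebraic machinery developed in Sections \ref{Section hopf algebra of permutations}--3 of the present paper. This is in the spirit of the paper's own re-interpretation --- equation $\left( \ref{expression of Yst}\right)$ and Theorem \ref{Theorem integration interpretation} assert precisely the identification $Y_{s,t}=F_{p_{x_{s}}}\left( X_{s}\right) ^{-1}F_{p_{x_{s}}}\left( X_{t}\right)$ that you take as your starting point --- but the paper stops at the identification and invokes Lyons for the existence theorem itself, whereas you push the algebra through to the almost-multiplicativity estimate. Your outline is essentially sound: the $p$-variation bound is indeed immediate from the explicit formula and permutation-invariance of the admissible norm; the multiplicativity defect does reduce, via the change-of-variable identity you state, to comparing the Taylor jet of $\theta$ re-centred at $x_{t}$ with its jet re-expanded from $x_{s}$, which is exactly where the $\limfunc{Lip}\left( \gamma -1\right)$ remainder enters; and the telescoping with Stein remainders gives the exponent $\min\left([p]+1,\gamma\right)/p>1$ as required.

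One point to be careful with: your formulation $Y_{s,t}=F_{p_{s}}\left( X_{s,t}\right)$ with $\left( D^{k}p_{s}\right)\!\left( 0\right) =\theta^{k+1}\left( x_{s}\right)$ is not literally the paper's $F_{p_{x_{s}}}\left( X_{s}\right) ^{-1}F_{p_{x_{s}}}\left( X_{t}\right)$, since in the latter the polynomial $p_{x_{s}}$ is centred at $x_{s}$, not at $0$. The two coincide after a translation, and for the truncated increment this is harmless, but it is precisely this translation that underlies the change-of-variable identity you flag as the ``real work''; the algebraic route via iterating Proposition \ref{Proposition expression of iterated integration of permutations} is the one the paper's setup is designed for, and it does go through, though the bookkeeping with the $\rho_{k_{2},n_{1}-k_{1}}$ permutations is genuinely tedious. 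The density argument you mention as an alternative is also valid for weakly geometric rough paths but would need a word on why the truncated $F_{p}$ is continuous for the $p$-variation topology level by level.
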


The multiplicative functional associated with $Y$ obtained in Theorem \ref%
{Theorem existence of the integration Lyons} is defined to be the integral
of the one-form $\theta $ along geometric rough path $X$ Theorem 3.3.1 p.274
Definition 3.2.3 p.288 \cite{lyons1998differential}. Denote the integral as $%
\int \theta \left( x\right) dX$. Based on Theorem 3.3.1 p.274 \cite%
{lyons1998differential},%
\begin{equation}
\int_{r=0}^{1}\theta \left( x_{r}\right) dX_{r}:=\lim_{\left\vert
D\right\vert \rightarrow 0,D\subset \left[ 0,1\right] }Y_{t_{0},t_{1}}\cdots
Y_{t_{n-1},t_{n}}\text{.}  \label{definition of integral}
\end{equation}

Based on the lift of polynomial one-form $p$ to exact one-form $dF_{p}$, the
integral $\int \theta \left( x\right) dX$ can be interpreted in terms of
time-varying exact one-forms.

Let $X:\left[ 0,1\right] \rightarrow G\left( V\right) $ be a geometric $p$%
-rough path, and let $\theta :V\rightarrow L\left( V,U\right) $ be a $%
\limfunc{Lip}\left( \gamma \right) $ one-form for $\gamma >p-1$. Let $x$
denote the projection of $X$ to a path in $V$. For $x_{s}\in V$, define the
polynomial one-form $p_{x_{s}}:V\rightarrow L\left( V,U\right) $ as%
\begin{equation}
p_{x_{s}}\left( v\right) \left( w\right) =\sum_{k=0}^{\left[ p\right]
}\theta ^{k}\left( x_{s}\right) \frac{\left( v-x_{s}\right) ^{\otimes k}}{k!}%
\left( w\right)  \label{polynomial one-form pv0}
\end{equation}%
for $v,w\in V$. For the polynomial one-form $p_{x_{s}}$, define $%
F_{p_{x_{s}}}$ as at $\left( \ref{Definition of F}\right) $. The almost
multiplicative functional $Y_{s,t}\ $in Definition \ref{Definition almost
multiplicative function Y} can be expressed as%
\begin{equation}
Y_{s,t}=F_{p_{x_{s}}}\left( X_{s}\right) ^{-1}F_{p_{x_{s}}}\left(
X_{t}\right) =:\int_{r=s}^{t}dF_{p_{x_{s}}}dX_{r}  \label{expression of Yst}
\end{equation}%
for every $s\leq t$. The equality $\left( \ref{expression of Yst}\right) $
follows from a generalized Chen's identity about the multiplicativity of
rough path liftings of controlled paths/effects (see Section \ref{Section
dominated paths} for details). The generalized Chen's identity can be proved
based on the uniqueness of the continuous lifting (Proposition \ref%
{Proposition stability under iterated integration}).

\begin{theorem}
\label{Theorem integration interpretation}For a geometric $p$-rough path $X:%
\left[ 0,1\right] \rightarrow G\left( V\right) $ and a $\limfunc{Lip}\left(
\gamma -1\right) $ one-form $\theta :V\rightarrow L\left( V,U\right) $ for $%
\gamma >p$, let $\int_{r=0}^{1}\theta \left( x_{r}\right) dX_{r}$ denote the
integral defined by Lyons in \cite{lyons1998differential}. Then with $%
p_{x_{s}}\ $defined at $\left( \ref{polynomial one-form pv0}\right) $ and $%
F_{p_{x_{s}}}\ $defined at $\left( \ref{Definition of F}\right) $,%
\begin{eqnarray*}
&&\int_{r=0}^{1}\theta \left( x_{r}\right) dX_{r} \\
&=&\lim_{\left\vert D\right\vert \rightarrow 0,D\subset \left[ 0,1\right]
}\int_{r=t_{0}}^{t_{1}}dF_{p_{x_{t_{0}}}}dX_{r}\cdots
\int_{r=t_{n-1}}^{t_{n}}dF_{p_{x_{t_{n-1}}}}dX_{r} \\
&=&:\int_{r=0}^{1}dF_{p_{x_{r}}}dX_{r}
\end{eqnarray*}%
where $D=\left\{ t_{k}\right\} _{k=0}^{n},0=t_{0}<\cdots <t_{n}=1,n\geq 1$
with $\left\vert D\right\vert :=\max_{k}\left\vert t_{k+1}-t_{k}\right\vert $%
.
\end{theorem}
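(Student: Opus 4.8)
The plan is to obtain the identity directly from the already-established expression $(\ref{expression of Yst})$ for Lyons' almost multiplicative functional together with the defining limit $(\ref{definition of integral})$ for his integral, so that the theorem becomes a reorganisation of facts already in hand rather than a new estimate. I would fix a finite partition $D=\{t_k\}_{k=0}^n$ of $[0,1]$ and apply $(\ref{expression of Yst})$ on each subinterval $[t_{k-1},t_k]$ to get
\begin{equation*}
\int_{r=t_{k-1}}^{t_k}dF_{p_{x_{t_{k-1}}}}dX_r=F_{p_{x_{t_{k-1}}}}\!\left(X_{t_{k-1}}\right)^{-1}F_{p_{x_{t_{k-1}}}}\!\left(X_{t_k}\right)=Y_{t_{k-1},t_k},
\end{equation*}
with $Y_{s,t}$ as in Definition \ref{Definition almost multiplicative function Y}; by Proposition \ref{Proposition F takes values in group} every such factor is group-like, so the ordered product over $D$ is a well-defined element of $G\left(U\right)$ that agrees, factor by factor, with the product on the right-hand side of the statement. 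Thus the families of partial products indexed by partitions of $[0,1]$ are identical.

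It would then remain only to let $\left\vert D\right\vert \to 0$. By Theorem \ref{Theorem existence of the integration Lyons}, $Y$ is almost multiplicative and of finite $p$-variation, so $\lim_{\left\vert D\right\vert \to 0}Y_{t_0,t_1}\cdots Y_{t_{n-1},t_n}$ exists, and by $(\ref{definition of integral})$ this limit is precisely Lyons' integral $\int_{r=0}^1\theta\left(x_r\right)dX_r$. Because the partial products coincide for every $D$, the limit on the right-hand side of the statement exists with the same value, which is the claimed equality; this also shows that the non-abelian Young integral $\int_{r=0}^1 dF_{p_{x_r}}dX_r$ of the introduction is well defined here.

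The one delicate point inside this argument is reconciling the truncated and the full tensor algebra: Lyons' $Y_{s,t}$ is a priori an element of $T^{([p])}\left(U\right)$, whereas $F_{p_{x_s}}\left(X_s\right)^{-1}F_{p_{x_s}}\left(X_t\right)$ is group-like in the full algebra, and $(\ref{expression of Yst})$ asserts that the projection of the latter onto levels $\leq[p]$ is exactly $Y_{s,t}$. To run the limit in the $G\left(U\right)$-metric $d$ at all levels I would check that $\left(s,t\right)\mapsto\int_{r=s}^t dF_{p_{x_s}}dX_r$ is itself almost multiplicative in $G\left(U\right)$ — controlling $F_{p_{x_s}}\left(X_u\right)F_{p_{x_u}}\left(X_u\right)^{-1}$ on each subinterval by the $\limfunc{Lip}\left(\gamma-1\right)$ bound on $\theta$ and the finite $p$-variation of $X$ — and then invoke the extension theorem for the higher levels. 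The genuine obstacle, however, lies outside the present proof: it is the generalized Chen identity $(\ref{expression of Yst})$ itself, which is established in Section \ref{Section dominated paths} from the uniqueness of the continuous lift, Proposition \ref{Proposition stability under iterated integration}. Granting that identity, the theorem follows as sketched.
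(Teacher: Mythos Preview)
Your proposal is correct and follows essentially the same approach as the paper: the paper's justification is the single sentence that the equality is based on $(\ref{definition of integral})$ and $(\ref{expression of Yst})$, with existence supplied by Theorem~\ref{Theorem existence of the integration Lyons}, which is exactly the chain you spell out. Your additional remarks on the truncation issue and on the role of the generalized Chen identity $(\ref{expression of Yst})$ are sound elaborations, but they do not constitute a different route.
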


The equality is based on $\left( \ref{definition of integral}\right) $ and $%
\left( \ref{expression of Yst}\right) $; the existence of the integral $\int
\theta \left( x\right) dX$ is obtained in Theorem 3.2.1 \cite%
{lyons1998differential} i.e. Theorem \ref{Theorem existence of the
integration Lyons}.

There is a minor difference between geometric rough paths $\Omega G\left(
V\right) ^{p}$ in \cite{lyons1998differential} and that defined in
Definition \ref{Definition geometric rough paths} (paths in Definition \ref%
{Definition geometric rough paths} are also called weak geometric rough
paths). The integration $\int_{r=0}^{1}dF_{p_{x_{r}}}dX_{r}$ in Theorem \ref%
{Theorem integration interpretation} can be applied to both classes of rough
paths.

The integration of time-varying exact one-forms exists in a general setting.
Consider two groups $G_{1}$ and $G_{2}$, and a path $X:\left[ 0,1\right]
\rightarrow G_{1}$. Suppose $f_{t}:G_{1}\rightarrow G_{2}\ $is a family of
functions indexed by $t\in \left[ 0,1\right] $. If the limit exists in $G_{2}
$:%
\begin{equation*}
\lim_{\left\vert D\right\vert \rightarrow 0,D=\left\{ t_{k}\right\}
_{k=0}^{n}\subset \left[ 0,1\right] }\int_{r=t_{0}}^{t_{1}}df_{t_{0}}dX_{r}%
\int_{r=t_{1}}^{t_{2}}df_{t_{1}}dX_{r}\cdots
\int_{r=t_{n-1}}^{t_{n}}df_{t_{n-1}}dX_{r}
\end{equation*}%
where $\int_{r=t_{k}}^{t_{k+1}}df_{t_{k}}dX_{r}:=f_{t_{k}}\left(
X_{t_{k}}\right) ^{-1}f_{t_{k}}\left( X_{t_{k+1}}\right) $, then the\
integral $\int_{r=0}^{1}df_{r}dX_{r}$ is defined to be the limit. A
sufficient condition for the existence of the integral is given in Theorem %
\ref{Theorem Integration} below based on \textquotedblleft a non-commutative
sewing lemma\textquotedblright\ by Feyel, de la Pradelle and Mokobodzki \cite%
{feyel2008non}. The integral can be viewed as an non-abelian analogue of
Young's integral \cite{young1936inequality}.

The integration of time-varying exact one-forms can also be explained via
\textit{reset} of functions. For $f:G_{1}\rightarrow G_{2}$, define the
reset of $f$ at $a\in G_{1}$ as%
\begin{eqnarray*}
f_{a}:G_{1} &\rightarrow &G_{2} \\
g &\mapsto &f\left( a\right) ^{-1}f\left( ag\right) .
\end{eqnarray*}%
The reset of functions are consistent with the integration of exact
one-forms. For $X:\left[ 0,1\right] \rightarrow G_{1}$, one has $%
\int_{r=0}^{1}dfdX_{r}=f\left( X_{0}\right) ^{-1}f\left( X_{1}\right)
=f_{X_{0}}\left( X_{0}^{-1}X_{1}\right) $. Consider a principal bundle $P$
on $G_{1}$ that associates each $a\in G_{1}$ with the group of functions $%
\left\{ h|h:G_{1}\rightarrow G_{2},h\left( 1_{G_{1}}\right)
=1_{G_{2}}\right\} $. The reset of functions defines a parallel
transportation on $P$. For $h\in P_{a},a\in G_{1}$, the parallel translation
of $h$ is given by
\begin{equation*}
h_{b}:=\left( h_{a}\right) _{a^{-1}b}
\end{equation*}%
for $b\in G_{1}$. The $\left\{ h_{b}|b\in G_{1}\right\} $ defined are
consistent:%
\begin{equation*}
\left( \left( h_{a}\right) _{a^{-1}b}\right) _{b^{-1}c}=\left( h_{a}\right)
_{a^{-1}c}
\end{equation*}%
for $b,c\in G_{1}$. For $X:\left[ 0,1\right] \rightarrow G_{1}$, consider
\begin{equation*}
\beta \in \left( X,P_{X}\right) \text{ i.e. }\beta :X_{t}\mapsto \beta
\left( X_{t}\right) \in P_{X_{t}}\text{.}
\end{equation*}%
Then the integration of time-varying exact one-forms (when exists)
\begin{equation*}
\int_{r=0}^{1}df_{r}dX_{r}:=\lim_{\left\vert D\right\vert \rightarrow
0,D=\left\{ t_{k}\right\} \subset \left[ 0,1\right] }%
\int_{r=t_{0}}^{t_{1}}df_{t_{0}}dX_{r}\cdots
\int_{r=t_{n-1}}^{t_{n}}df_{t_{n-1}}dX_{r}
\end{equation*}%
can be equivalently defined as%
\begin{equation*}
\int_{r=0}^{1}\beta \left( X_{r}\right) dX_{r}:=\lim_{\left\vert
D\right\vert \rightarrow 0,D=\left\{ t_{k}\right\} \subset \left[ 0,1\right]
}\beta \left( X_{t_{0}}\right) \left( X_{t_{0},t_{1}}\right) \cdots \beta
\left( X_{t_{n-1}}\right) \left( X_{t_{n-1},t_{n}}\right)
\end{equation*}%
where $\beta \left( X_{t}\right) \left( g\right) :=f\left( X_{t}\right)
^{-1}f\left( X_{t}g\right) =f_{X_{t}}\left( g\right) ,g\in G_{1},t\in \left[
0,1\right] $, and $\left\{ \beta \left( X_{t}\right) \right\} _{t}$ are
compared after parallel translation.

For a fixed continuous path $X:\left[ 0,1\right] \rightarrow G_{1}$,
consider a condition on exact one-forms $\left( df_{t}\right) _{t}$ for $%
f_{t}:G_{1}\rightarrow G_{2}$ that guarantees the existence of the integral $%
\int_{r=0}^{1}df_{r}dX_{r}$. Theorem \ref{Theorem Integration} below gives a
condition that roughly states that, if one-step discrete approximations are
comparable to two-steps discrete approximations up to a small error, then
the integral exists as a limit of Riemann products. When $X$ is a geometric $%
p$-rough path, the condition on $\left( df_{t}\right) _{t}$ can be further
specified, and the condition can be viewed as an inhomogeneous analogue of
Young's condition p.264 \cite{young1936inequality}. Such a condition is
closely related to the notion of weakly controlled paths introduced by
Gubinelli \cite{gubinelli2004controlling}. The following is Definition 1
\cite{gubinelli2004controlling}.

\begin{definition}[Gubinelli, weakly controlled paths]
Fix an interval $I\subseteq
\mathbb{R}
$ and let $X\in \mathcal{C}^{\gamma }\left( I,V\right) $. A path $Z\in
\mathcal{C}^{\gamma }\left( I,V\right) $ is said to be weakly controlled by $%
X$ in $I$ with a reminder of order $\eta $ if there exists a path $Z^{\prime
}\in \mathcal{C}^{\eta -\gamma }\left( I,V\otimes V^{\ast }\right) $ and a
process $R_{Z}\in \Omega C^{\eta }\left( I,V\right) $ with $\eta >\gamma $
such that%
\begin{equation*}
\delta Z^{\mu }=Z^{\prime \mu \nu }\delta X^{\nu }+R_{Z}^{\mu }\text{.}
\end{equation*}%
If this is the case we will write $\left( Z,Z^{\prime }\right) \in \mathcal{D%
}_{X}^{\gamma ,\eta }\left( I,V\right) $ and we will consider on the linear
space $\mathcal{D}_{X}^{\gamma ,\eta }\left( I,V\right) $ the semi-norm%
\begin{equation*}
\left\Vert Z\right\Vert _{\mathcal{D}\left( X,\gamma ,\eta \right)
,I}:=\left\Vert Z^{\prime }\right\Vert _{\infty ,I}+\left\Vert Z^{\prime
}\right\Vert _{\eta -\gamma ,I}+\left\Vert R_{Z}\right\Vert _{\eta
,I}+\left\Vert Z\right\Vert _{\gamma ,I}\text{.}
\end{equation*}
\end{definition}

For a fixed geometric rough path $X$, we consider a class of integrable
one-forms whose indefinite integrals are called effects (see Section \ref%
{Section dominated paths} for more details). The set of effects of $X$ is a
subset of the paths controlled by $X$. The relationship between controlled
paths and effects is comparable to that between the integrand and the
integral. In the integration of one-form $\int \alpha \left( x\right) dX$, $%
t\mapsto \alpha \left( x_{t}\right) $ is a controlled path; $t\mapsto
\int_{r=0}^{t}\alpha \left( x_{r}\right) dX_{r}$ is an effect so a
controlled path. A controlled path can also be interpreted as a time-varying
exact one-form, and its varying speed can be a little quicker than that of
an effect. One benefit of working with effects is that basic operations
(multiplication, composition with regular functions, integration, iterated
integration) are continuous operations in the space of one-forms in operator
norm. In particular, the lifting of an effect to a geometric rough path is
continuous. In \cite{lyons2015theory} effects (dominated paths) are employed
to give a short proof of the unique solvability and stability of the
solution to differential equations driven by rough paths, and differences
between adjacent Picard iterations decay factorially in operator norm.

\section{Integration of time-varying exact one-forms}

\label{Section generalized integration}

For continuous $x:\left[ 0,1\right] \rightarrow
\mathbb{C}
$ of finite $p$-variation and continuous $y:\left[ 0,1\right] \rightarrow
\mathbb{C}
$ of finite $q$-variation $p^{-1}+q^{-1}>1,p\geq 1,q\geq 1$, Young \cite%
{young1936inequality} defined the Stieltjes integral $%
\int_{r=0}^{1}x_{r}dy_{r}$ as the limit of Riemann sums. Lyons \cite%
{lyons1998differential} defined the integration of one-forms along geometric
rough paths by constructing a multiplicative functional from an almost
multiplicative functional.

In \cite{feyel2008non} Feyel, La Pradelle and Mokobodzki proved a
non-commutative sewing lemma that constructs multiplicative functions taking
values in a monoid with a distance.

Based on \cite{feyel2008non}, let $M$ be a monoid with a unit element $I$,
and $M$ is complete under a distance $d$ that satisfies%
\begin{equation}
d\left( xz,yz\right) \leq \left\vert z\right\vert d\left( x,y\right) ,\text{%
\ }d\left( zx,zy\right) \leq \left\vert z\right\vert d\left( x,y\right)
\label{metric on monoid}
\end{equation}%
for $x,y,z\in G_{2}$, where $z\mapsto \left\vert z\right\vert $ is a
Lipschitz function on $M$ with $\left\vert I\right\vert =1$.

Suppose $V:[0,T)\rightarrow \overline{%
\mathbb{R}
^{+}}$ is a \textit{strong control function} i.e. $V\left( 0\right) =0$,
non-decreasing, and there exsits a $\theta >2$ such that for every $t$%
\begin{equation*}
\overline{V}\left( t\right) :=\sum_{n\geq 0}\theta ^{n}V\left(
t2^{-n}\right) <\infty \text{.}
\end{equation*}%
For example, $V\left( t\right) =t^{\alpha }$ when $\alpha >1$ is a strong
control function.

Suppose $\mu :\left\{ \left( s,t\right) |0\leq s\leq t<T\right\} \rightarrow
\left( M,d\right) $ is continuous, $\mu \left( t,t\right) =I$ for every $t$,
and
\begin{equation}
d\left( \mu \left( s,t\right) ,\mu \left( s,u\right) \mu \left( u,t\right)
\right) \leq V\left( t-s\right)  \label{condition on mu}
\end{equation}%
for every $s\leq u\leq t$. $u:\left\{ \left( s,t\right) |0\leq s\leq
t<T\right\} \rightarrow \left( M,d\right) $ is called \textit{multiplicative}
if $u\left( s,t\right) =u\left( s,u\right) u\left( u,t\right) $ for every $%
s\leq u\leq t$.

\begin{theorem}[Feyel, La Pradelle, Mokobodzki]
\label{Theorem Feyel, LaPradelle and Mokobodzki}There exists a unique
continuous multiplicative function $u$ such that $d\left( \mu \left(
s,t\right) ,u\left( s,t\right) \right) \leq \limfunc{Cst}\overline{V}\left(
t-s\right) $ for every $s\leq t$.
\end{theorem}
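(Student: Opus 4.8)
The plan is to build $u$ as the limit of the Riemann products of $\mu$ along successively refined dyadic partitions, and to read off the bound $d(\mu(s,t),u(s,t))\le C\,\overline{V}(t-s)$ from a telescoping series whose convergence is exactly what the hypothesis $\theta>2$, $\overline{V}(t)=\sum_{n\ge0}\theta^nV(t2^{-n})<\infty$, is designed to deliver. Concretely, for $s<t$ put $m=(s+t)/2$ and set $\mu^{(0)}(s,t)=\mu(s,t)$, $\mu^{(n+1)}(s,t)=\mu^{(n)}(s,m)\,\mu^{(n)}(m,t)$, so that $\mu^{(n)}(s,t)$ is the product of $\mu$ over the $n$-th dyadic subdivision of $[s,t]$. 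Passing from level $n$ to level $n+1$ replaces each factor of $\mu^{(n)}(s,t)$ by the two-piece product obtained from bisecting its interval, and by $\left( \ref{condition on mu}\right)$ each such replacement costs at most $V((t-s)2^{-n})$ in $d$. The real content of the estimate is controlling how these local errors accumulate through the surrounding noncommutative product, which is where the compatibility inequalities $\left( \ref{metric on monoid}\right)$ enter: they give $d(xzy,xz'y)\le|x|\,|y|\,d(z,z')$, so each local error is amplified only by the norms of the two flanking partial products.

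I would carry this out first on intervals short enough that, for all $s\le a\le b\le t$, $1+L\big(\sup_{s\le a\le b\le t}d(\mu(a,b),I)+\overline{V}(t-s)\big)\le\theta/2$; such intervals exist because $\mu$ is continuous, $\mu(t,t)=I$, $|I|=1$ and $|\cdot|$ is Lipschitz with constant $L$, and $\theta>2$ leaves room above $1$. Writing $\varepsilon_{n+1}(s,t)=d(\mu^{(n)}(s,t),\mu^{(n+1)}(s,t))$ and running a bootstrap induction on the refinement level $n$ — the bound at level $n$ keeping $|\mu^{(k)}|\le\theta/2$ for $k\le n$ via $|\mu^{(k)}|\le|\mu|+L\,d(\mu^{(k)},\mu)$, which in turn controls the flanking norms at level $n+1$ — one obtains $\varepsilon_{n+1}(s,t)\le(\theta/2)\big(\varepsilon_n(s,m)+\varepsilon_n(m,t)\big)$ for $n\ge1$, while $\varepsilon_1(s,t)\le V(t-s)$. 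Unrolling over the dyadic tree gives $\varepsilon_n(s,t)\le\theta^{\,n-1}V\!\big((t-s)2^{-(n-1)}\big)$, whence $\sum_{n\ge1}\varepsilon_n(s,t)\le\overline{V}(t-s)$. Therefore $(\mu^{(n)}(s,t))_n$ is Cauchy in the complete space $(M,d)$; I set $u(s,t):=\lim_n\mu^{(n)}(s,t)$, and telescoping gives $d(\mu(s,t),u(s,t))\le\overline{V}(t-s)$ on such short intervals. This bookkeeping is the main obstacle: without the localization the flanking norms could compound as $(\theta/2)^{2^n}$ and destroy summability, so the surplus $\theta-2>0$ is precisely what must absorb both the bisection factor $2$ and the norm amplification.

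For general $s\le t<T$ I would cut $[s,t]$ into finitely many of the short intervals above, define $u(s,t)$ as the product of the corresponding local limits, and check that this is independent of the cut — since on each short interval $u$ is by construction the dyadic limit and hence multiplicative there. This yields a globally defined multiplicative $u$ that, by one more application of the flanking-norm estimate to a fixed finite decomposition, still satisfies $d(\mu(s,t),u(s,t))\le C\,\overline{V}(t-s)$, and continuity of $u$ then follows from continuity of $\mu$ and $\overline{V}(\delta)\to0$ as $\delta\to0$. Finally, for uniqueness, suppose $\tilde u$ is continuous, multiplicative and satisfies $d(\mu,\tilde u)\le C'\,\overline{V}$. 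On a short interval, expanding $\tilde u(s,t)$ as the product of $\tilde u$ over the level-$n$ dyadic partition and comparing termwise with $\mu^{(n)}(s,t)$ via $\left( \ref{metric on monoid}\right)$ gives $d(\tilde u(s,t),\mu^{(n)}(s,t))\le C''\,2^n\,\overline{V}((t-s)2^{-n})$; since $2^n\overline{V}((t-s)2^{-n})\le\sum_{j\ge n}\theta^jV((t-s)2^{-j})$, the tail of the convergent series defining $\overline{V}(t-s)$, this tends to $0$, so $\tilde u(s,t)=\lim_n\mu^{(n)}(s,t)=u(s,t)$, and multiplicativity propagates the equality to all $s\le t$.
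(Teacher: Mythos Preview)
The paper does not prove this theorem: it is quoted verbatim from Feyel, de~la~Pradelle and Mokobodzki \cite{feyel2008non} and used as a black box to set up Theorem~\ref{Theorem Integration}. So there is no in-paper proof to compare against.

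That said, your sketch reproduces the standard argument from \cite{feyel2008non}: dyadic refinement $\mu^{(n)}$, a telescoping bound $\sum_n \varepsilon_n\le\overline V$ obtained by unrolling the recursion $\varepsilon_{n+1}(s,t)\le(\theta/2)\big(\varepsilon_n(s,m)+\varepsilon_n(m,t)\big)$ over the dyadic tree, with a localization step to keep all $|\mu^{(k)}|\le\theta/2$ so the flanking amplification stays tame. The bootstrap you describe (bounding $|\mu^{(k)}|$ via the Lipschitz property of $|\cdot|$ and the already-established $d(\mu^{(k)},\mu)\le\overline V$) is exactly how this closes, and the gluing over a finite cover into short pieces is the right way to globalize.

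One small imprecision in your uniqueness paragraph: a direct ``termwise'' replacement of $\tilde u_k$ by $\mu_k$ in the $2^n$-fold product leaves you with flanking factors like $\mu_1\cdots\mu_{k-1}$ that are not of the form $\mu^{(j)}$ on any interval and are not a~priori bounded by $\theta/2$ (nor is $|\cdot|$ submultiplicative). The clean route is the recursive one: use multiplicativity of $\tilde u$ to write $d(\tilde u(s,t),\mu^{(n)}(s,t))\le(\theta/2)\big(d(\tilde u(s,m),\mu^{(n-1)}(s,m))+d(\tilde u(m,t),\mu^{(n-1)}(m,t))\big)$ and iterate, which yields $\theta^{n}\,\overline V((t-s)2^{-n})=\sum_{j\ge n}\theta^{j}V((t-s)2^{-j})\to 0$ rather than your stated $2^n\,\overline V((t-s)2^{-n})$. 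The conclusion is the same tail-of-series argument, so this is a bookkeeping fix, not a gap.
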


Let $G_{1}$ and $G_{2}$ be two groups, and suppose $G_{2}$ is complete under
a distance $d$ that satisfies $\left( \ref{metric on monoid}\right) $. Let $%
X:\left[ 0,T\right] \rightarrow G_{1}$. Suppose $f_{t}:G_{1}\rightarrow
G_{2} $ is a family of functions indexed by $t\in \left[ 0,T\right] $.
Define $\mu :\left\{ \left( s,t\right) |0\leq s\leq t<T\right\} \rightarrow
\left( G_{2},d\right) $ as%
\begin{equation}
\mu \left( s,t\right) :=f_{s}\left( X_{s}\right) ^{-1}\!f_{s}\left(
X_{t}\right) \in G_{2}  \label{definition of mu}
\end{equation}
for $s\leq t$. Suppose $\left( \ref{condition on mu}\right) $ holds for a
strong control function $V$.

Let $u:\left\{ \left( s,t\right) |0\leq s\leq t<T\right\} \rightarrow \left(
G_{2},d\right) \ $denote the multiplicative function associated with $\mu $
at $\left( \ref{definition of mu}\right) $ obtained by Theorem \ref{Theorem
Feyel, LaPradelle and Mokobodzki}.

\begin{theorem}
\label{Theorem Integration}Define $\int_{r=0}^{\cdot }df_{r}dX_{r}:\left[ 0,T%
\right] \rightarrow G_{2}$ as $\int_{r=0}^{t}df_{r}dX_{r}:=u\left(
0,t\right) $ for every $t$. Then $\int_{r=0}^{\cdot }df_{r}dX_{r}$ is the
unique continuous path $y:\left[ 0,T\right] \rightarrow G_{2}$ such that $%
y_{0}=1_{G_{2}}$ and $d(y_{s}^{-1}y_{t},\int_{r=s}^{t}d\!f_{s}dX_{r})\leq
\limfunc{Cst}\overline{V}\left( t-s\right) $ for every $s\leq t$.
\end{theorem}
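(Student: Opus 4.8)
The plan is to derive the statement directly from Theorem~\ref{Theorem Feyel, LaPradelle and Mokobodzki} applied to the $\mu$ of $(\ref{definition of mu})$. First I would unwind notation: by the definition of the integral of an exact one-form, $\int_{r=s}^{t}df_{s}dX_{r}=f_{s}(X_{s})^{-1}f_{s}(X_{t})=\mu(s,t)$, so the estimate to be verified and characterised reads simply $d(y_{s}^{-1}y_{t},\mu(s,t))\le \mathrm{Cst}\,\overline{V}(t-s)$. I would also record that, since $d$ satisfies $(\ref{metric on monoid})$ with $|\cdot|$ Lipschitz (hence locally bounded), the multiplication and inversion of $G_{2}$ are continuous for $d$; in particular $d(x^{-1},y^{-1})\le |x^{-1}||y^{-1}|\,d(x,y)$, obtained by left-multiplying by $x^{-1}$ and right-multiplying by $y^{-1}$ inside $d(x,y)$.

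For existence, set $y_{t}:=u(0,t)=\int_{r=0}^{t}df_{r}dX_{r}$ with $u$ the multiplicative function furnished by Theorem~\ref{Theorem Feyel, LaPradelle and Mokobodzki}. Multiplicativity forces $u(0,0)=u(0,0)u(0,0)$, hence $y_{0}=1_{G_{2}}$, and $y$ is continuous because $u$ is. Again by multiplicativity $u(0,t)=u(0,s)u(s,t)$, so $y_{s}^{-1}y_{t}=u(s,t)$ for $s\le t$, whence $d(y_{s}^{-1}y_{t},\mu(s,t))=d(u(s,t),\mu(s,t))\le \mathrm{Cst}\,\overline{V}(t-s)$. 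Thus $y$ has all the asserted properties.

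For uniqueness, let $y:[0,T]\to G_{2}$ be any continuous path with $y_{0}=1_{G_{2}}$ and $d(y_{s}^{-1}y_{t},\mu(s,t))\le \mathrm{Cst}\,\overline{V}(t-s)$ for all $s\le t$. Define $w(s,t):=y_{s}^{-1}y_{t}$. Then $w$ is continuous, $w(t,t)=1_{G_{2}}$, and $w$ is multiplicative, since $w(s,u)w(u,t)=y_{s}^{-1}y_{u}y_{u}^{-1}y_{t}=y_{s}^{-1}y_{t}=w(s,t)$; moreover $d(\mu(s,t),w(s,t))\le \mathrm{Cst}\,\overline{V}(t-s)$ by hypothesis. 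The uniqueness clause of Theorem~\ref{Theorem Feyel, LaPradelle and Mokobodzki} then gives $w=u$, and taking $s=0$ yields $y_{t}=y_{0}\,w(0,t)=u(0,t)=\int_{r=0}^{t}df_{r}dX_{r}$ for every $t$, so the path is unique.

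The only delicate point is making sure the uniqueness in Theorem~\ref{Theorem Feyel, LaPradelle and Mokobodzki} applies to $w$ irrespective of the numerical value of the constant appearing in the hypothesis on $y$. This is precisely the robustness built into the non-commutative sewing lemma of \cite{feyel2008non}: because $V$ is a \emph{strong} control ($\theta>2$), the dyadic comparison of two multiplicative functions both lying within $O(\overline{V})$ of $\mu$ still closes, after first localising to intervals short enough that $|w|$ and $|u|$ stay bounded, and forces the two to coincide. Everything else in the argument is bookkeeping with multiplicativity, so I expect this to be the main, and essentially the only, obstacle.
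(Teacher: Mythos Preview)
Your proposal is correct and follows exactly the route the paper intends: the paper gives no separate proof of Theorem~\ref{Theorem Integration}, treating it as an immediate reformulation of Theorem~\ref{Theorem Feyel, LaPradelle and Mokobodzki} applied to the $\mu$ of~$(\ref{definition of mu})$, and your argument (existence via $y_{t}:=u(0,t)$ and multiplicativity, uniqueness by setting $w(s,t):=y_{s}^{-1}y_{t}$ and invoking the uniqueness clause of the sewing lemma) is precisely that unpacking. The side remark on $d(x^{-1},y^{-1})$ is not needed for the argument, but does no harm.
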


Based on the definition, for $X:\left[ 0,1\right] \rightarrow G_{1}$ and $%
f:G_{1}\rightarrow G_{2}$, $\int_{r=0}^{1}dfdX_{r}:=f\left( X_{0}\right)
^{-1}f\left( X_{1}\right) $.

Based p.31 \cite{feyel2008non} Theorem \ref{Theorem Integration} applies
when $G_{2}$ is a group of elements beginning with $1$ in the\ tensor
algebra over a Banach space when tensor powers are equipped with admissible
norms.

\section{Effects of a geometric rough path}

\label{Section dominated paths}

The set of effects of a geometric rough path is a subset of the paths
controlled by the geometric rough path. Similar to controlled paths, effects
are stable under basic operations. Integrals of one-forms and solution to
differential equations are effects so are controlled paths.

Let $V$ and $U$ be two Banach spaces, and let $G\left( V\right) $ be the set
of group-like elements in the tensor algebra $T\left( V\right) $. For $p\geq
1$, set $\left[ p\right] :=\max \left\{ n|n\in
\mathbb{N}
,n\leq p\right\} $. For $k=1,\dots ,\left[ p\right] $, denote by $L\left(
V^{\otimes k},U\right) $ the set of continuous linear operators from $%
V^{\otimes k}$ to $U$.

\begin{notation}
Let $E^{U}$ denote the vector bundle on $G\left( V\right) $ that associates
each $a\in G\left( V\right) $ with the vector space:%
\begin{equation}
E_{a}^{U}:=\left\{ \phi \bigg|\phi :G\left( V\right) \rightarrow U,\phi
=\sum_{k=1}^{\left[ p\right] }\phi ^{k},\phi ^{k}\in L\left( V^{\otimes
k},U\right) \right\}  \label{definition of vector bundle E}
\end{equation}%
where $\phi ^{k}\left( x\right) :=\phi ^{k}x^{k}$ for $x\in G\left( V\right)
,x=\sum_{k\geq 0}x^{k},x^{k}\in V^{\otimes k}$.
\end{notation}

$E_{a}^{U}$ can be considered as the space of `polynomials' up to degree-$%
\left[ p\right] $ that has no `constant', and can be viewed as the
polynomial approximation to the `tangent space' of functions $\left\{
f_{a}|f:G\left( V\right) \rightarrow U\right\} $ with $f_{a}\left( x\right)
:=f\left( ax\right) -f\left( a\right) ,x\in G\left( V\right) $.

The parallel transportation on $E^{U}$ is given by the reset of functions.

\begin{notation}
For $p_{a}\in E_{a}^{U}$ and $b\in G\left( V\right) $, define $\left(
p_{a}\right) _{a^{-1}b}\in E_{b}^{U}$ as%
\begin{equation*}
\left( p_{a}\right) _{a^{-1}b}\left( x\right) :=p_{a}\left( a^{-1}bx\right)
-p_{a}\left( a^{-1}b\right)
\end{equation*}%
for $x\in G\left( V\right) $.
\end{notation}

Then $\left( \left( p_{a}\right) _{a^{-1}b}\right) _{b^{-1}c}=\left(
p_{a}\right) _{a^{-1}c}$ for $a,b,c\in G\left( V\right) $.

For $\phi \in E_{a},\phi =\sum_{k=1}^{\left[ p\right] }\phi ^{k}$, denote
\begin{equation*}
\left\Vert \phi \right\Vert :=\max_{k=1,\dots ,\left[ p\right] }\left\Vert
\phi ^{k}\right\Vert \text{ and }\left\Vert \phi \right\Vert
_{k}:=\left\Vert \phi ^{k}\right\Vert
\end{equation*}%
where $\left\Vert \phi ^{k}\right\Vert $ denotes the norm of $\phi ^{k}$ as
a linear operator.

\begin{definition}[Operator Norm]
\label{Definition operator norm}Let $X:\left[ 0,1\right] \rightarrow G\left(
V\right) $ be a geometric $p$-rough path for some $p\geq 1$, and let $U$ be
a Banach space. Suppose
\begin{equation*}
\beta \in \left( X,E_{X}^{U}\right) \text{ i.e. }\beta :X_{t}\mapsto \beta
\left( X_{t}\right) \in E_{X_{t}}^{U}.
\end{equation*}%
For $t\in \left[ 0,1\right] $ and $a\in G\left( V\right) $, define%
\begin{eqnarray*}
\left( \beta \left( X_{t}\right) \right) _{a} &\in &E_{X_{t}a}^{U} \\
x &\mapsto &\beta \left( X_{t}\right) \left( ax\right) -\beta \left(
X_{t}\right) \left( a\right) ,x\in G\left( V\right) .
\end{eqnarray*}%
For a control $\omega $ and $\theta >1$, define the operator norm
\begin{equation*}
\left\Vert \beta \right\Vert _{\theta }^{\omega }:=\sup_{t\in \left[ 0,1%
\right] }\left\Vert \beta \left( X_{t}\right) \right\Vert +\max_{k=1,\dots ,%
\left[ p\right] }\sup_{0\leq s<t\leq 1}\frac{\left\Vert \beta \left(
X_{t}\right) -\left( \beta \left( X_{s}\right) \right)
_{X_{s}^{-1}X_{t}}\right\Vert _{k}}{\omega \left( s,t\right) ^{\theta -\frac{%
k}{p}}}\text{.}
\end{equation*}
\end{definition}

\begin{definition}[Slowly-Varying One-Form]
\label{Definition slowly-varying one-form}Let $X:\left[ 0,1\right]
\rightarrow G\left( V\right) $ be a geometric $p$-rough path for some $p\geq
1$, and let $U$ be a Banach space. Then $\beta \in \left( X,E_{X}^{U}\right)
$ is called a slowly varying one-form, if there exists a control $\omega $
and $\theta >1$ such that $\left\Vert \beta \right\Vert _{\theta }^{\omega
}<\infty $.
\end{definition}

For each $t\in \left[ 0,1\right] $, $\beta \left( X_{t}\right) $ can be
viewed as a continuous linear mapping from monomials (components of rough
paths) to the vector space $U$.

For a control $\omega \,\ $and $\theta >1$, the set of slowly varying
one-forms along $X$ with finite operator norm $\left\Vert \cdot \right\Vert
_{\theta }^{\omega }$ form a Banach space.

Suppose $\beta \in \left( X,E_{X}^{U}\right) $ is a slowly-varying one-form.
Let $\beta \left( X_{t}\right) _{X_{t}^{-1}}\ $be considered as functions $%
G\left( V\right) \rightarrow U$ index by $t\in \left[ 0,1\right] $. Define%
\begin{eqnarray*}
&&\int_{r=0}^{1}\beta \left( X_{r}\right) dX_{r} \\
&:&=\int_{r=0}^{1}d\left( \beta \left( X_{r}\right) _{X_{r}^{-1}}\right)
dX_{r} \\
&:&=\lim_{\left\vert D\right\vert \rightarrow 0,D\subset \left[ 0,1\right]
}\left( \beta \left( X_{t_{k}}\right) _{X_{t_{k}}^{-1}}\left(
X_{t_{k+1}}\right) -\beta \left( X_{t_{k}}\right) _{X_{t_{k}}^{-1}}\left(
X_{t_{k}}\right) \right) \\
&=&\lim_{\left\vert D\right\vert \rightarrow 0,D\subset \left[ 0,1\right]
}\sum_{k,t_{k}\in D}\beta \left( X_{t_{k}}\right) \left(
X_{t_{k},t_{k+1}}\right)
\end{eqnarray*}%
where the last equality is based on $\beta \left( X_{t_{k}}\right) \left(
1_{G\left( V\right) }\right) =0$. The integral exists based on the slowly
varying condition on $\beta $.

\begin{definition}[Effects]
Let $X:\left[ 0,1\right] \rightarrow G\left( V\right) $ be a geometric $p$%
-rough path for some $p\geq 1$, and let $\beta \in \left( X,E_{X}^{U}\right)
$ be a slowly varying one-form. Then for $\xi \in U$, the integral path
\begin{equation*}
t\mapsto \xi +\int_{r=0}^{t}\beta \left( X_{r}\right) dX_{r},t\in \left[ 0,1%
\right]
\end{equation*}%
is called an effect of $X$.
\end{definition}

\begin{theorem}
\label{Proposition continuity of integral w.r.t. one-form}Suppose $\beta \in
\left( X,E_{X}^{U}\right) $ is a slowly-varying one-form such that $%
\left\Vert \beta \right\Vert _{\theta }^{\omega }<\infty $ for a control $%
\omega $ and $\theta >1$. Define $h:\left[ 0,1\right] \rightarrow U$ as
\begin{equation*}
h_{t}:=\int_{r=0}^{t}\beta \left( X_{r}\right) dX_{r},t\in \left[ 0,1\right]
.
\end{equation*}%
Then with control $\hat{\omega}:=\omega +\left\Vert X\right\Vert
_{p-var}^{p} $,%
\begin{equation*}
\left\Vert h_{t}-h_{s}-\beta _{s}\left( X_{s}\right) \left( X_{s,t}\right)
\right\Vert \leq C_{p,\theta ,\hat{\omega}\left( 0,T\right) }\left\Vert
\beta \right\Vert _{\theta }^{\omega }\hat{\omega}\left( s,t\right) ^{\theta
}
\end{equation*}%
for every $s\leq t$, and%
\begin{equation*}
\left\Vert h\right\Vert _{p-var,\left[ 0,T\right] }\leq C_{p,\theta ,\hat{%
\omega}\left( 0,T\right) }\left\Vert \beta \right\Vert _{\theta }^{\omega }%
\text{.}
\end{equation*}
\end{theorem}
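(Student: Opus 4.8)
The plan is to show that the Riemann increment $\mu(s,t):=\beta(X_s)(X_{s,t})$, with $X_{s,t}:=X_s^{-1}X_t$ and $\beta(X_s)=\sum_{k=1}^{[p]}\beta^{k}(X_s)$ as in Definition \ref{Definition operator norm}, is an almost-multiplicative functional on $[0,T]$ with values in the Banach space $(U,+)$ whose multiplicative defect is of order $\hat\omega(s,t)^{\theta}$, and then to feed this into the sewing lemma of Theorem \ref{Theorem Feyel, LaPradelle and Mokobodzki}/Theorem \ref{Theorem Integration} (applied with $G_2=(U,+)$, $d(x,y)=\|x-y\|$, $|\cdot|\equiv 1$, and $f_t=\beta(X_t)_{X_t^{-1}}$, so that $f_s(X_s)^{-1}f_s(X_t)=\mu(s,t)$). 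By the definition of the integral of effects, $h_t-h_s=\int_{r=s}^{t}\beta(X_r)\,dX_r=\lim_{|D|\to0,\,D\subset[s,t]}\sum_k\mu(t_k,t_{k+1})$ is exactly the sewn function attached to $\mu$, so the quantitative part of the sewing lemma gives precisely the first asserted inequality, in which $\beta_{s}(X_{s})(X_{s,t})$ is $\mu(s,t)$.

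First I would compute the defect. By Chen's identity $X_{s,t}=X_{s,u}X_{u,t}$, and by the definition of the reset in Definition \ref{Definition operator norm}, $\beta(X_s)(X_{s,t})-\beta(X_s)(X_{s,u})=(\beta(X_s))_{X_s^{-1}X_u}(X_{u,t})$ with $(\beta(X_s))_{X_s^{-1}X_u}\in E^{U}_{X_u}$, hence
\[
\mu(s,t)-\mu(s,u)-\mu(u,t)=\sum_{k=1}^{[p]}\Big[(\beta(X_s))_{X_s^{-1}X_u}-\beta(X_u)\Big]^{k}(X_{u,t})^{k}.
\]
By Definition \ref{Definition operator norm} the $k$-th term has norm at most $\|\beta\|_\theta^{\omega}\,\omega(s,u)^{\theta-k/p}\,\|(X_{u,t})^{k}\|$, and $\|(X_{u,t})^{k}\|\le d(X_u,X_t)^{k}\le\|X\|_{p\text{-var},[u,t]}^{k}\le\hat\omega(u,t)^{k/p}$. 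Since $\theta-k/p\ge\theta-1>0$, $\omega\le\hat\omega$, and $\hat\omega(s,u),\hat\omega(u,t)\le\hat\omega(s,t)$, each term is at most $\|\beta\|_\theta^{\omega}\hat\omega(s,t)^{\theta}$, so $\|\mu(s,t)-\mu(s,u)-\mu(u,t)\|\le[p]\,\|\beta\|_\theta^{\omega}\hat\omega(s,t)^{\theta}$ for all $s\le u\le t$. Here $\mu$ is continuous, $\mu(t,t)=\beta(X_t)(1_{G(V)})=0$, and $\hat\omega=\omega+\|X\|_{p\text{-var}}^{p}$ is a control (the sum of the control $\omega$ and the control $(s,t)\mapsto\|X\|_{p\text{-var},[s,t]}^{p}$).

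These are exactly the hypotheses of the sewing lemma: reparametrising each subinterval $[s,t]$ by $r\mapsto\hat\omega(s,r)$, whose increments telescope to $\hat\omega(s,t)$, turns the defect bound into domination by the strong control function $r\mapsto\mathrm{Cst}\cdot r^{\theta}$ (using $\theta>1$), and Theorem \ref{Theorem Feyel, LaPradelle and Mokobodzki} yields the unique sewn function together with $\|h_t-h_s-\mu(s,t)\|\le C_{p,\theta,\hat\omega(0,T)}\|\beta\|_\theta^{\omega}\hat\omega(s,t)^{\theta}$, the first inequality. For the $p$-variation bound I would combine this with the crude estimate $\|\mu(s,t)\|\le\sum_{k=1}^{[p]}\|\beta^{k}(X_s)\|\,\|(X_{s,t})^{k}\|\le\|\beta\|_\theta^{\omega}\sum_{k=1}^{[p]}\hat\omega(s,t)^{k/p}$; since $\hat\omega(s,t)\le\hat\omega(0,T)$ and every exponent occurring is $\ge1/p$, factoring out $\hat\omega(s,t)^{1/p}$ gives $\|h_t-h_s\|\le C_{p,\theta,\hat\omega(0,T)}\|\beta\|_\theta^{\omega}\hat\omega(s,t)^{1/p}$. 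Raising to the $p$-th power and summing over an arbitrary partition of $[0,T]$, super-additivity of $\hat\omega$ yields $\sum_k\|h_{t_{k+1}}-h_{t_k}\|^{p}\le\big(C_{p,\theta,\hat\omega(0,T)}\|\beta\|_\theta^{\omega}\big)^{p}\hat\omega(0,T)$, and taking the supremum and the $p$-th root gives $\|h\|_{p\text{-var},[0,T]}\le C_{p,\theta,\hat\omega(0,T)}\|\beta\|_\theta^{\omega}$.

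The step I expect to be the main obstacle is producing the defect in exactly the form $\le C\,\hat\omega(s,t)^{\theta}$: one must unwind the reset of $\beta(X_s)$ together with Chen's identity so that the difference with $\beta(X_u)$ lies in the common fibre $E^{U}_{X_u}$ and Definition \ref{Definition operator norm} can be applied level by level, then control $\|(X_{u,t})^{k}\|$ by $\hat\omega(u,t)^{k/p}$ and recombine the mixed powers $\hat\omega(s,u)^{\theta-k/p}\hat\omega(u,t)^{k/p}$ into $\hat\omega(s,t)^{\theta}$. Feeding this control-type bound (rather than a bound in $t-s$) into the sewing lemma is the reason for the per-subinterval reparametrisation by $\hat\omega(s,\cdot)$; once the defect estimate is in hand, both conclusions of the theorem are routine.
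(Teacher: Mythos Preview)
Your proposal is correct and is essentially the same argument the paper points to: the paper's proof consists only of the sentence ``The first estimate can be proved similarly to result 5 on p.254 Young \cite{young1936inequality}; the second estimate follows from the first,'' and Young's result~5 is precisely the abelian point-removal/sewing argument that you carry out in detail, with the defect $\mu(s,t)-\mu(s,u)-\mu(u,t)=\big[(\beta(X_s))_{X_{s,u}}-\beta(X_u)\big](X_{u,t})$ controlled level by level via Definition~\ref{Definition operator norm}. Your routing through Theorem~\ref{Theorem Feyel, LaPradelle and Mokobodzki} (specialised to the abelian monoid $(U,+)$) rather than Young's direct partition argument is a cosmetic difference; the only point worth tightening is the reparametrisation step, where it is slightly cleaner to run Young's point-removal directly with the control $\hat\omega$ (using super-additivity to find a partition point whose removal costs at most a constant times $\hat\omega(s,t)^{\theta}/(n-1)^{\theta}$) than to force the statement of Theorem~\ref{Theorem Feyel, LaPradelle and Mokobodzki} via a time change.
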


The first estimate can be proved similarly to result 5 on p.254 Young \cite%
{young1936inequality}; the second estimate follows from the first.

\section{Stability of Effects under Basic Operations}

Consider the set of effects of a geometric rough path. Effects are closed
under basic operations (multiplication, composition with regular functions,
integration, iterated integration); the proof is similar to that for
controlled paths as in Proposition 4 on p.100 and Theorem 1 on p.101
Gubinelli \cite{gubinelli2004controlling}. For effects these operations are
continuous in the space of one-forms in operator norm.

In this section, we fix a geometric $p$-rough path $X:\left[ 0,1\right]
\rightarrow G\left( V\right) $ for some $p\geq 1$, and consider the set of
effects of $X$.

For $k\geq 1$ let $1_{k}$ denote the identity element in $S_{k}$. Let $\cdot
:%
\mathbb{Z}
S\times T\left( V\right) \rightarrow T\left( V\right) $ denote the bilinear
map given by $\sigma \cdot \left( v_{1}\otimes \cdots \otimes v_{n}\right)
:=v_{\sigma 1}\otimes \cdots \otimes v_{\sigma n}$ for $\sigma \in
S_{n},v_{i}\in V$. For $p\geq 1,\gamma \geq 1$, denote $\left[ p\right]
:=\max \left\{ n|n\in
\mathbb{N}
,n\leq p\right\} $ and $\lfloor \gamma \rfloor :=\max \left\{ n|n\in
\mathbb{N}
,n<\gamma \right\} $.

\subsection{Composition with Regular Functions}

The stability of effects under composition with regular functions follows
from the fact that polynomials are closed under composition.

For Banach spaces $U$ and $W$, denote by $C^{\gamma }\left( U,W\right) $ the
set of functions $\varphi :U\rightarrow W$ that are $\lfloor \gamma \rfloor $%
-times Fr\'{e}chet differentiable with the $\lfloor \gamma \rfloor $th
derivative $\left( \gamma -\lfloor \gamma \rfloor \right) $-H\"{o}lder,
uniformly on any bounded set. That is, for each $R>0$,%
\begin{equation*}
\left\Vert \varphi \right\Vert _{\gamma ,R}:=\max_{k=0,1,\dots ,\lfloor
\gamma \rfloor }\left\Vert \left( D^{k}\varphi \right) \right\Vert _{\infty
,R}+\left\Vert \left( D^{\lfloor \gamma \rfloor }\varphi \right) \right\Vert
_{\left( \gamma -\lfloor \gamma \rfloor \right) -\text{H\"{o}l},R}
\end{equation*}%
where $\left\Vert \cdot \right\Vert _{\infty ,R}$ resp. $\left\Vert \cdot
\right\Vert _{\left( \gamma -\lfloor \gamma \rfloor \right) -\text{H\"{o}l}%
,R}$ denote the uniform resp. H\"{o}lder norm on $\left\{ u\in U|\left\Vert
u\right\Vert \leq R\right\} $.

For $\phi \left( X_{t}\right) \in E_{X_{t}}^{U}$ and $l=1,\dots ,\left[ p%
\right] $, define the `truncated polynomial':%
\begin{multline*}
\prod\nolimits_{\leq \left[ p\right] }\left( \phi \left( X_{t}\right)
^{\otimes l}\right) \in E_{X_{t}}^{U^{\otimes l}} \\
x\mapsto \sum_{\substack{ k_{1}+\cdots +k_{l}\leq \left[ p\right]  \\ %
k_{i}=1,\dots ,\left[ p\right] }}\left( \phi ^{k_{1}}\left( X_{t}\right)
\otimes \cdots \otimes \phi ^{k_{l}}\left( X_{t}\right) \right) \left(
\left( 1_{k_{1}}\ast ^{\prime }\cdots \ast ^{\prime }1_{k_{l}}\right) \cdot
\left( x^{k_{1}+\cdots +k_{l}}\right) \right)
\end{multline*}%
for $x\in G\left( V\right) $, where $\phi =\sum_{k=1}^{\left[ p\right] }\phi
^{k},\phi ^{k}\in L\left( V^{\otimes k},U\right) $ and $x=\sum_{k\geq
0}x^{k},x^{k}\in V^{\otimes k}$.

\begin{proposition}
Suppose $\beta _{1}\in \left( X,E_{X}^{U}\right) $ is a slowly-varying
one-form such that $\left\Vert \beta _{1}\right\Vert _{\omega _{1}}^{\theta
_{1}}<\infty $ for a control $\omega _{1}$ and $\theta _{1}>1$. Denote $%
h_{t}:=\int_{0}^{t}\beta _{1}\left( X_{t}\right) dX_{t},t\in \left[ 0,1%
\right] $. For $\varphi \in C^{\gamma }\left( U,W\right) $, $\gamma >p$,
define $\beta \in \left( X,E_{X}^{W}\right) \,$as%
\begin{equation*}
\beta \left( X_{t}\right) \left( x\right) :=\sum_{l=1}^{\left[ p\right] }%
\frac{1}{l!}\left( D^{l}\varphi \right) \left( h_{t}\right) \left(
\prod\nolimits_{\leq \left[ p\right] }\left( \beta _{1}\left( X_{t}\right)
^{\otimes l}\right) \right) \left( x\right)
\end{equation*}%
for $x\in G\left( V\right) $. Then with $\omega :=\omega _{1}+\left\Vert
X\right\Vert _{p-var}^{p}$ and $\theta :=\min \left( \theta _{1},\frac{%
\gamma }{p},\frac{\left[ p\right] +1}{p}\right) $,%
\begin{equation}
\left\Vert \beta \right\Vert _{\theta }^{\omega }\leq C_{p,\theta ,\omega
\left( 0,1\right) }\left\Vert \varphi \right\Vert _{\gamma ,\left\Vert
h\right\Vert _{\infty }}\max \left( \left\Vert \beta _{1}\right\Vert
_{\theta _{1}}^{\omega _{1}},\left( \left\Vert \beta _{1}\right\Vert
_{\theta _{1}}^{\omega _{1}}\right) ^{\left[ p\right] }\right) \text{,}
\label{Composition operator norm}
\end{equation}%
where $\left\Vert h\right\Vert _{\infty }:=\sup_{t\in \left[ 0,1\right]
}\left\Vert h_{t}\right\Vert $, and
\begin{equation*}
\int_{r=0}^{t}\beta \left( X_{r}\right) dX_{r}=\varphi \left( h_{t}\right)
-\varphi \left( h_{0}\right) \text{ for }t\in \left[ 0,1\right] \text{.}
\end{equation*}
\end{proposition}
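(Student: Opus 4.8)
The plan is to follow the scheme of Proposition 4 on p.100 of Gubinelli \cite{gubinelli2004controlling} (and, for the quantitative bound, result 5 on p.254 of Young \cite{young1936inequality}), replacing the classical chain rule by shuffle combinatorics. Write $\omega :=\omega _{1}+\left\Vert X\right\Vert _{p-var}^{p}$; this is the control called $\hat{\omega}$ in Theorem \ref{Proposition continuity of integral w.r.t. one-form} applied to $\beta _{1}$, so $\left\Vert h_{t}-h_{s}-\beta _{1}\left( X_{s}\right) \left( X_{s,t}\right) \right\Vert \leq C\left\Vert \beta _{1}\right\Vert _{\theta _{1}}^{\omega _{1}}\omega \left( s,t\right) ^{\theta _{1}}$ and $\left\Vert h\right\Vert _{p-var}\leq C\left\Vert \beta _{1}\right\Vert _{\theta _{1}}^{\omega _{1}}$, while $\left\Vert X_{s,t}^{k}\right\Vert \leq \left\Vert X\right\Vert _{p-var,\left[ s,t\right] }^{k}\leq \omega \left( s,t\right) ^{k/p}$. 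Two algebraic facts are needed. First, since every $g\in G\left( V\right) $ is a character of the shuffle algebra, $\left( 1_{k_{1}}\ast ^{\prime }\cdots \ast ^{\prime }1_{k_{l}}\right) \cdot g^{k_{1}+\cdots +k_{l}}=g^{k_{1}}\otimes \cdots \otimes g^{k_{l}}$, so for $\phi =\sum_{k=1}^{\left[ p\right] }\phi ^{k}$ in some $E_{a}^{U}$ and group-like $g$,
\begin{equation*}
\big( \textstyle\prod\nolimits_{\leq \left[ p\right] }\left( \phi ^{\otimes l}\right) \big) \left( g\right) =\sum_{\substack{ k_{1}+\cdots +k_{l}\leq \left[ p\right] \\ k_{i}\geq 1}}\phi ^{k_{1}}\!\left( g^{k_{1}}\right) \otimes \cdots \otimes \phi ^{k_{l}}\!\left( g^{k_{l}}\right) ,
\end{equation*}
the part of $\left( \phi \left( g\right) \right) ^{\otimes l}$, $\phi \left( g\right) =\sum_{k\geq 1}\phi ^{k}\!\left( g^{k}\right) $, whose chosen tensor factors carry total $V$-degree $\leq \left[ p\right] $. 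Second, from $\left( \phi \right) _{g}\left( x\right) =\phi \left( gx\right) -\phi \left( g\right) $ (the transported one-form of Section \ref{Section dominated paths}) and $\left( gx\right) ^{n}=\sum_{j}g^{j}\otimes x^{n-j}$ one reads off $\left( \left( \phi \right) _{g}\right) ^{k}=\sum_{j\geq k}\phi ^{j}\!\left( g^{j-k}\otimes \,\cdot \,\right) $ for the degree-$k$ block.

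For the integral identity, the first fact with $\phi =\beta _{1}\left( X_{s}\right) $ and $g=X_{s,t}$ gives
\begin{equation*}
\beta \left( X_{s}\right) \left( X_{s,t}\right) =\sum_{l=1}^{\left[ p\right] }\frac{1}{l!}\left( D^{l}\varphi \right) \left( h_{s}\right) \Big( \textstyle\sum_{k_{1}+\cdots +k_{l}\leq \left[ p\right] }\beta _{1}^{k_{1}}\!\left( X_{s}\right) \!\left( X_{s,t}^{k_{1}}\right) \otimes \cdots \otimes \beta _{1}^{k_{l}}\!\left( X_{s}\right) \!\left( X_{s,t}^{k_{l}}\right) \Big) .
\end{equation*}
Removing the restriction $k_{1}+\cdots +k_{l}\leq \left[ p\right] $ costs $O\big( \omega \left( s,t\right) ^{\left( \left[ p\right] +1\right) /p}\big) $ because $\left\Vert X_{s,t}^{k}\right\Vert \leq \omega \left( s,t\right) ^{k/p}$; replacing each $\beta _{1}\left( X_{s}\right) \left( X_{s,t}\right) $ by $h_{t}-h_{s}$ costs $O\big( \omega \left( s,t\right) ^{\theta _{1}}\big) $ by Theorem \ref{Proposition continuity of integral w.r.t. one-form}; and extending the $l$-sum to $l\leq \lfloor \gamma \rfloor $ and invoking Taylor's theorem for $\varphi \in C^{\gamma }$ absorbs the remainder into $O\big( \omega \left( s,t\right) ^{\gamma /p}\big) $. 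Hence
\begin{equation*}
\left\Vert \beta \left( X_{s}\right) \left( X_{s,t}\right) -\left( \varphi \left( h_{t}\right) -\varphi \left( h_{s}\right) \right) \right\Vert \leq C\,\omega \left( s,t\right) ^{\theta },\qquad \theta =\min \big( \theta _{1},\tfrac{\gamma }{p},\tfrac{\left[ p\right] +1}{p}\big) >1.
\end{equation*}
As $\beta \left( X_{t_{k}}\right) \left( 1_{G\left( V\right) }\right) =0$, a Riemann sum $\sum_{k}\beta \left( X_{t_{k}}\right) \left( X_{t_{k},t_{k+1}}\right) $ differs from the telescoping sum $\varphi \left( h_{t}\right) -\varphi \left( h_{0}\right) $ by at most $\sum_{k}C\omega \left( t_{k},t_{k+1}\right) ^{\theta }\leq C\big( \max_{k}\omega \left( t_{k},t_{k+1}\right) \big) ^{\theta -1}\omega \left( 0,t\right) \rightarrow 0$; since $\beta $ (once shown slowly varying) integrates to that limit, $\int_{0}^{t}\beta \left( X_{r}\right) dX_{r}=\varphi \left( h_{t}\right) -\varphi \left( h_{0}\right) $.

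For $\left( \ref{Composition operator norm}\right) $, the supremum term is immediate from $\left\Vert \left( D^{l}\varphi \right) \left( h_{t}\right) \right\Vert \leq \left\Vert \varphi \right\Vert _{\gamma ,\left\Vert h\right\Vert _{\infty }}$ (with $\left\Vert h\right\Vert _{\infty }\leq C\left\Vert \beta _{1}\right\Vert _{\theta _{1}}^{\omega _{1}}$) and $\left\Vert \prod\nolimits_{\leq \left[ p\right] }\left( \beta _{1}\left( X_{t}\right) ^{\otimes l}\right) \right\Vert \leq C_{p}\left\Vert \beta _{1}\left( X_{t}\right) \right\Vert ^{l}$, summed over $l\leq \left[ p\right] $. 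For the increment term I would expand $\left( \beta \left( X_{s}\right) \right) _{X_{s,t}}$ using the reset formula blockwise and the symmetry of $D^{l}\varphi $, giving
\begin{equation*}
\left( \beta \left( X_{s}\right) \right) _{X_{s,t}}=\sum_{l}\frac{1}{l!}T_{l}\Big( \textstyle\prod\nolimits_{\leq \left[ p\right] }\big( \left( \beta _{1}\left( X_{s}\right) \right) _{X_{s,t}}^{\otimes l}\big) \Big) ,\quad T_{l}:=\sum_{j=0}^{\left[ p\right] -l}\frac{1}{j!}\left( D^{l+j}\varphi \right) \left( h_{s}\right) \!\left( a^{\otimes j}\otimes \,\cdot \,\right) ,
\end{equation*}
the order-$\left( \left[ p\right] -l\right) $ Taylor polynomial of $D^{l}\varphi $ at $h_{s}$ in the direction $a:=\beta _{1}\left( X_{s}\right) \left( X_{s,t}\right) $. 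Subtracting this from $\beta \left( X_{t}\right) =\sum_{l}\frac{1}{l!}\left( D^{l}\varphi \right) \left( h_{t}\right) \big( \prod\nolimits_{\leq \left[ p\right] }\big( \beta _{1}\left( X_{t}\right) ^{\otimes l}\big) \big) $ and inserting $\pm T_{l}\big( \prod\nolimits_{\leq \left[ p\right] }\big( \beta _{1}\left( X_{t}\right) ^{\otimes l}\big) \big) $ splits the degree-$k$ block of $\beta \left( X_{t}\right) -\left( \beta \left( X_{s}\right) \right) _{X_{s,t}}$ into a piece controlled by $\left\Vert \beta _{1}\left( X_{t}\right) ^{j}-\left( \beta _{1}\left( X_{s}\right) \right) _{X_{s,t}}^{j}\right\Vert \leq \left\Vert \beta _{1}\right\Vert _{\theta _{1}}^{\omega _{1}}\omega _{1}\left( s,t\right) ^{\theta _{1}-j/p}$, hence of size $\omega \left( s,t\right) ^{\theta _{1}-k/p}$, and a piece $\big( \left( D^{l}\varphi \right) \left( h_{t}\right) -T_{l}\big) \circ \prod\nolimits_{\leq \left[ p\right] }\big( \left( \beta _{1}\left( X_{s}\right) \right) _{X_{s,t}}^{\otimes l}\big) $ which, by $h_{t}=h_{s}+a+O\big( \omega ^{\theta _{1}}\big) $ and Taylor's theorem, has size $\omega \left( s,t\right) ^{\theta -k/p}$ (the degree-truncation giving the exponent $\left( \left[ p\right] +1\right) /p$, the order-$\gamma $ remainder the exponent $\gamma /p$). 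Collecting and using $\omega \geq \omega _{1}$, $\left\Vert X_{s,t}^{k}\right\Vert \leq \omega \left( s,t\right) ^{k/p}$ and the bounds on $\left\Vert h\right\Vert _{\infty }$, $\left\Vert h_{t}-h_{s}\right\Vert $, every degree-$k$ block is $\leq C_{p,\theta ,\omega \left( 0,1\right) }\left\Vert \varphi \right\Vert _{\gamma ,\left\Vert h\right\Vert _{\infty }}\max \big( \left\Vert \beta _{1}\right\Vert _{\theta _{1}}^{\omega _{1}},\left( \left\Vert \beta _{1}\right\Vert _{\theta _{1}}^{\omega _{1}}\right) ^{\left[ p\right] }\big) \omega \left( s,t\right) ^{\theta -k/p}$, which is $\left( \ref{Composition operator norm}\right) $.

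The hard part is the reorganization in the last display — that $\left( \beta \left( X_{s}\right) \right) _{X_{s,t}}$ is again of the form $\sum \tfrac{1}{l!}T_{l}\big( \prod\nolimits_{\leq \left[ p\right] }\big( \left( \beta _{1}\left( X_{s}\right) \right) _{X_{s,t}}^{\otimes l}\big) \big) $. This rests on the compatibility of the truncated shuffle products $1_{k_{1}}\ast ^{\prime }\cdots \ast ^{\prime }1_{k_{l}}$ with deconcatenation (the identity for $\delta ^{\prime }$ used in the proof of Proposition \ref{Proposition expression of iterated integration of permutations}) together with the symmetry of the Fr\'{e}chet derivatives of $\varphi $, and on the ensuing bookkeeping that keeps each of the three error sources — the variation of $\beta _{1}$, the Taylor remainder of $\varphi $, and the degree-$\left[ p\right] $ truncation — inside the budget $\omega \left( s,t\right) ^{\theta -k/p}$.
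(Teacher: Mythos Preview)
Your argument is correct and follows essentially the same route as the paper. The paper's proof is terser: it records the reset identity $(\phi^{\otimes l})_{a}(x)=(\phi(a)+\phi_{a}(x))^{\otimes l}-\phi^{\otimes l}(a)$ and then writes down a three-term decomposition of $\beta(X_t)-(\beta(X_s))_{X_{s,t}}$ directly (Taylor remainder of $D^{l}\varphi$; difference $(h_t-h_s)^{\otimes j}-\beta_1(X_s)(X_{s,t})^{\otimes j}$; and the $\beta_1(X_t)$ versus $(\beta_1(X_s))_{X_{s,t}}$ piece). Your $T_{l}$ reorganisation is exactly what falls out of that reset identity after binomial expansion and regrouping by the number of non-constant tensor factors, using the symmetry of $D^{l+j}\varphi$; the two splittings differ only in the order of the $\pm$ insertions. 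One small slip: after inserting $\pm T_{l}\big(\prod_{\leq[p]}(\beta_1(X_t)^{\otimes l})\big)$ the ``Taylor'' piece carries $\beta_1(X_t)$, not $(\beta_1(X_s))_{X_{s,t}}$, but this is harmless for the estimate. Your treatment of the integral identity is more explicit than the paper's one-line ``comparison of local expansions'', and is fine.
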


\begin{remark}
Effects are closed under multiplication and form an algebra. For Banach
spaces $U_{i},i=1,2$, consider $\varphi :\left( U_{1},U_{2}\right)
\rightarrow U_{1}\otimes U_{2}$ given by $\left( u_{1},u_{2}\right) \mapsto
u_{1}\otimes u_{2}$. Then $\varphi $ is smooth and $D^{3}\varphi \equiv 0$.
\end{remark}

\begin{proof}
For $l=1,\dots ,\left[ p\right] $,%
\begin{equation*}
\left( \phi ^{\otimes l}\right) _{a}\left( x\right) =\left( \phi \left(
a\right) +\phi _{a}\left( x\right) \right) ^{\otimes l}-\phi ^{\otimes
l}\left( a\right)
\end{equation*}%
for $x\in G\left( V\right) $.

We rescale $\varphi $ by $\left\Vert \varphi \right\Vert _{\gamma
,\left\Vert h\right\Vert _{\infty }}^{-1}$ and assume $\left\Vert \varphi
\right\Vert _{\gamma ,\left\Vert h\right\Vert _{\infty }}=1$. Denote $%
X_{s,t}:=X_{s}^{-1}X_{t}$. For $s\leq t$,%
\begin{eqnarray*}
&&\left( \beta \left( X_{t}\right) -\left( \beta \left( X_{s}\right) \right)
_{X_{s,t}}\right) \left( x\right) \\
&=&\sum_{l=1}^{\left[ p\right] }\frac{1}{l!}\left( \left( D^{l}\varphi
\right) \left( h_{t}\right) -\sum_{j=0}^{\left[ p\right] -l}\frac{1}{j!}%
\left( D^{j+l}\varphi \right) \left( h_{s}\right) \left( h_{t}-h_{s}\right)
^{\otimes l}\right) \left( \prod\nolimits_{\leq \left[ p\right] }\left(
\beta _{1}\left( X_{t}\right) ^{\otimes l}\right) \right) \left( x\right) \\
&&+\sum_{l=1}^{\left[ p\right] }\sum_{j=0}^{\left[ p\right] -l}\frac{1}{l!}%
\frac{1}{j!}\left( D^{l+j}\varphi \right) \left( h_{s}\right) \left( \left(
h_{t}-h_{s}\right) ^{\otimes j}-\beta \left( X_{s}\right) \left(
X_{s,t}\right) ^{\otimes j}\right) \left( \prod\nolimits_{\leq \left[ p%
\right] }\left( \beta _{1}\left( X_{t}\right) ^{\otimes l}\right) \right)
\left( x\right) \\
&&+\sum_{l=1}^{\left[ p\right] }\frac{1}{l!}\left( D^{l}\varphi \right)
\left( h_{s}\right) \prod\nolimits_{\leq \left[ p\right] }\left( \left(
\beta _{1}\left( X_{s}\right) \left( X_{s,t}\right) +\beta _{1}\left(
X_{t}\right) \right) ^{\otimes l}-\left( \beta _{1}\left( X_{s}\right)
\left( X_{s,t}\right) +\left( \beta _{1}\left( X_{s}\right) \right)
_{X_{s,t}}\right) ^{\otimes l}\right) \left( x\right)
\end{eqnarray*}%
for $x\in G\left( V\right) $. Then the estimate $\left( \ref{Composition
operator norm}\right) $ follows from Theorem \ref{Proposition continuity of
integral w.r.t. one-form}. Based on the comparison of local expansions,%
\begin{equation*}
\int_{r=0}^{t}\beta \left( X_{r}\right) dX_{r}=\varphi \left( h_{t}\right)
-\varphi \left( 0\right) ,t\in \left[ 0,1\right] \text{.}
\end{equation*}
\end{proof}

\subsection{Iterated Integration}

Let $U_{i},i=1,2$ be two Banach spaces. For $\phi _{i}\in
E_{X_{t}}^{U_{i}},i=1,2$, define the `truncated iterated integration':%
\begin{eqnarray*}
\prod\nolimits_{\leq \left[ p\right] }\left( \phi _{1}\succ \phi _{2}\right)
&\in &E_{X_{t}}^{U_{1}\otimes U_{2}} \\
x &\mapsto &\sum_{\substack{ k_{1}+k_{2}\leq \left[ p\right]  \\ %
k_{i}=1,\dots \left[ p\right] }}\left( \phi _{1}^{k_{1}}\otimes \phi
_{2}^{k_{2}}\right) \left( \left( 1_{k_{1}}\succ 1_{k_{2}}\right) \cdot
\left( x^{k_{1}+k_{2}}\right) \right)
\end{eqnarray*}%
for $x\in G\left( V\right) $, where $\phi _{i}=\sum_{k=1}^{\left[ p\right]
}\phi _{i}^{k},\phi _{i}^{k}\in L\left( V^{\otimes k},U_{i}\right) $, $%
x=\sum_{k\geq 0}x^{k},x^{k}\in V^{\otimes k}$.

\begin{proposition}
\label{Proposition stability under iterated integration}For $i=1,2$, let $%
\beta _{i}\in \left( X,E_{X}^{U_{i}}\right) $ be a slowly-varying one-form
such that $\left\Vert \beta _{i}\right\Vert _{\theta _{i}}^{\omega
_{i}}<\infty $ for a control $\omega _{i}$ and $\theta _{i}>1$. Define $%
\beta \in \left( X,E_{X}^{U_{1}\otimes U_{2}}\right) $ as%
\begin{equation*}
\beta \left( X_{t}\right) :=\left( \int_{r=0}^{t}\beta _{1}\left(
X_{r}\right) dX_{r}\right) \otimes \beta _{2}\left( X_{t}\right)
+\prod\nolimits_{\leq \left[ p\right] }\left( \beta _{1}\left( X_{t}\right)
\succ \beta _{2}\left( X_{t}\right) \right)
\end{equation*}%
for $t\in \left[ 0,1\right] $. Then with $\omega :=\omega _{1}+\omega
_{2}+\left\Vert X\right\Vert _{p-var}^{p}$ and $\theta :=\min \left( \theta
_{1},\theta _{2}\right) $,%
\begin{equation}
\left\Vert \beta \right\Vert _{\theta }^{\omega }\leq C_{p,\theta ,\omega
\left( 0,1\right) }\left\Vert \beta _{1}\right\Vert _{\theta _{1}}^{\omega
_{1}}\left\Vert \beta _{2}\right\Vert _{\theta _{2}}^{\omega _{2}}\text{.}
\label{Iterated Integration operator norm}
\end{equation}
\end{proposition}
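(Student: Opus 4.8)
The plan is to bound separately the two parts of $\|\beta\|_{\theta}^{\omega}$: the supremum $\sup_{t}\|\beta(X_{t})\|$ and, for each $k\in\{1,\dots,[p]\}$, the ratio $\sup_{s<t}\|\beta(X_{t})-(\beta(X_{s}))_{X_{s,t}}\|_{k}/\omega(s,t)^{\theta-k/p}$. Write $h_{t}:=\int_{0}^{t}\beta_{1}(X_{r})\,dX_{r}$ and $X_{s,t}:=X_{s}^{-1}X_{t}$ throughout. The supremum is straightforward: Theorem~\ref{Proposition continuity of integral w.r.t. one-form} applied to $\beta_{1}$ (whose control there, $\omega_{1}+\|X\|_{p\text{-var}}^{p}$, is $\le\omega$) gives $\sup_{t}\|h_{t}\|\le\|h\|_{p\text{-var},[0,1]}\le C_{p,\theta,\omega(0,1)}\|\beta_{1}\|_{\theta_{1}}^{\omega_{1}}$ (the constant being monotone in the exponent, so using $\theta$ is safe), while admissibility of the tensor norms makes every permutation act isometrically, so the operator norm on $V^{\otimes(k_{1}+k_{2})}$ of the ordered shuffle $1_{k_{1}}\succ1_{k_{2}}$ is at most the number of its terms, a constant $c_{p}$. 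Since the degree-$k$ component of $\beta(X_{t})$ is $h_{t}\otimes\beta_{2}^{k}(X_{t})+\sum_{k_{1}+k_{2}=k}(\beta_{1}^{k_{1}}(X_{t})\otimes\beta_{2}^{k_{2}}(X_{t}))\circ(1_{k_{1}}\succ1_{k_{2}})$ and $\|\beta_{i}^{j}(X_{t})\|\le\|\beta_{i}(X_{t})\|\le\|\beta_{i}\|_{\theta_{i}}^{\omega_{i}}$, this yields $\sup_{t}\|\beta(X_{t})\|\le C_{p,\theta,\omega(0,1)}\|\beta_{1}\|_{\theta_{1}}^{\omega_{1}}\|\beta_{2}\|_{\theta_{2}}^{\omega_{2}}$.

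For the defect, decompose $\beta(X_{t})=A_{t}+B_{t}$ with $A_{t}(x):=h_{t}\otimes\beta_{2}(X_{t})(x)$ and $B_{t}:=\prod\nolimits_{\le[p]}(\beta_{1}(X_{t})\succ\beta_{2}(X_{t}))$. Since $h_{s}$ does not depend on the argument, the reset of $A$ is immediate, $(A_{s})_{X_{s,t}}(x)=h_{s}\otimes(\beta_{2}(X_{s}))_{X_{s,t}}(x)$, so $A_{t}-(A_{s})_{X_{s,t}}=(h_{t}-h_{s})\otimes\beta_{2}(X_{t})+h_{s}\otimes(\beta_{2}(X_{t})-(\beta_{2}(X_{s}))_{X_{s,t}})$. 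For the reset of $B$ I would expand $(B_{s})_{X_{s,t}}(x)=B_{s}(X_{s,t}x)-B_{s}(X_{s,t})$ and feed each $1_{k_{1}}\succ1_{k_{2}}$, evaluated along the concatenation $X_{s,t}x$, into the change-of-variable identity of Proposition~\ref{Proposition expression of iterated integration of permutations} (with $X_{s,t}$ and $x$ in the roles of the two group-like elements), using the homomorphism property $conc\circ(\widehat{z}(\sigma)\otimes\widehat{z}(\rho))=\widehat{z}(\sigma\ast'\rho)$ of Proposition~\ref{Proposition free Lie group is a group of characters} to collapse the shuffle factors $\widehat{X_{s,t}}(1_{j_{1}}\ast'1_{j_{2}})$ to the simple tensors $X_{s,t}^{j_{1}}\otimes X_{s,t}^{j_{2}}$. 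Subtracting the value at $x=1_{G(V)}$ kills the pure-$X_{s,t}$ contributions; the identity then sorts $(B_{s})_{X_{s,t}}$ into a ``pure-$x$'' part (the $j_{1}=j_{2}=0$ summands) reproducing $B_{s}$ evaluated at $x$, a ``one-form times value'' part (the $j_{1}=k_{1}$, $j_{2}=0$ summands) equal to $\sum_{k_{1}+k_{2}\le[p]}\beta_{1}^{k_{1}}(X_{s})(X_{s,t}^{k_{1}})\otimes\beta_{2}^{k_{2}}(X_{s})(x^{k_{2}})$, and a family of mixed terms each carrying a factor $X_{s,t}^{a}$ with $a\ge1$ against a $\succ$-iterated piece of $x$ of complementary degree.

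Now I would assemble $\beta(X_{t})-(\beta(X_{s}))_{X_{s,t}}$ from the two resets and cancel. Matching the ``pure-$x$'' part of $(B_{s})_{X_{s,t}}$ against $B_{t}$ produces differences $\beta_{1}^{k_{1}}(X_{t})\otimes\beta_{2}^{k_{2}}(X_{t})-\beta_{1}^{k_{1}}(X_{s})\otimes\beta_{2}^{k_{2}}(X_{s})$; writing each $\beta_{i}^{k}(X_{t})-\beta_{i}^{k}(X_{s})=[\beta_{i}(X_{t})-(\beta_{i}(X_{s}))_{X_{s,t}}]^{k}+\sum_{a\ge1}\beta_{i}^{a+k}(X_{s})(X_{s,t}^{a}\otimes\,\cdot\,)$ splits these into a slowly-varying part (controlled by $\|\beta_{i}\|_{\theta_{i}}^{\omega_{i}}\omega_{i}(s,t)^{\theta_{i}-k/p}$, which at degree $k=k_{1}+k_{2}$ is $\le C_{\omega(0,1)}\omega(s,t)^{\theta-k/p}$ since $\theta_{i}\ge\theta$ and $k_{3-i}\ge1$) and ``$X_{s,t}^{a}$-tails''. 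The leading term $(h_{t}-h_{s})\otimes\beta_{2}(X_{t})$ is treated via the first estimate of Theorem~\ref{Proposition continuity of integral w.r.t. one-form}: $h_{t}-h_{s}=\beta_{1}(X_{s})(X_{s,t})+r_{s,t}$ with $\|r_{s,t}\|\le C\|\beta_{1}\|_{\theta_{1}}^{\omega_{1}}\omega(s,t)^{\theta_{1}}$, the remainder $r_{s,t}\otimes\beta_{2}(X_{t})$ being a harmless residual, and $\beta_{1}(X_{s})(X_{s,t})\otimes\beta_{2}(X_{t})$ cancelling the ``one-form times value'' part up to replacing $\beta_{2}(X_{t})$ by $\beta_{2}(X_{s})$ (slowly-varying again) and up to terms with $k_{1}+k_{2}>[p]$. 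The ``$X_{s,t}^{a}$-tails'' produced here, together with the further tails coming from $\beta_{2}^{k}(X_{t})-\beta_{2}^{k}(X_{s})$, then cancel in turn against the mixed terms of $(B_{s})_{X_{s,t}}$; after the full cascade the only survivors are slowly-varying differences of $\beta_{1},\beta_{2}$, the remainder $r_{s,t}$, and terms sitting in total tensor degree $\ge[p]+1$. A survivor of the last kind carries a factor $X_{s,t}^{m}$ with $m$ plus its degree in $x$ at least $[p]+1$, so at degree $k$ it is $\le C_{p}\|\beta_{1}\|_{\theta_{1}}^{\omega_{1}}\|\beta_{2}\|_{\theta_{2}}^{\omega_{2}}\omega(s,t)^{([p]+1)/p-k/p}\le C_{p,\omega(0,1)}\|\beta_{1}\|_{\theta_{1}}^{\omega_{1}}\|\beta_{2}\|_{\theta_{2}}^{\omega_{2}}\omega(s,t)^{\theta-k/p}$, where the last step uses $\theta\le([p]+1)/p$ (here one should, as in the composition proposition, take $\theta=\min(\theta_{1},\theta_{2},([p]+1)/p)$, since for $\beta_{1},\beta_{2}$ exact the bare $\min(\theta_{1},\theta_{2})$ may exceed $([p]+1)/p$ while the increment of $h$ limits $\beta$ to that rate). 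Summing over $k\le[p]$ and converting the $\omega_{i}$ to $\omega$ gives \eqref{Iterated Integration operator norm}.

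The main obstacle is precisely the bookkeeping of the previous paragraph: one must verify that the change-of-variable expansion of $(B_{s})_{X_{s,t}}$, the reset of $A$, and the Young-type remainder of $h$ interlock so that every term of total tensor degree $\le[p]$ cancels exactly and the residuals all land in total degree $\ge[p]+1$. This is the $\succ/\ast'$ incarnation of the estimate behind Gubinelli's iterated integration of controlled paths (Proposition~4 and Theorem~1 of \cite{gubinelli2004controlling}) and of step~5 on p.~254 of \cite{young1936inequality}, with Proposition~\ref{Proposition expression of iterated integration of permutations} supplying exactly the algebraic identity that generates the cancellation; once the clean expansion is in hand, the permutation-counting bound from admissibility and the already-established integration estimate of Theorem~\ref{Proposition continuity of integral w.r.t. one-form} reduce everything to elementary comparisons of powers of $\omega(s,t)$.
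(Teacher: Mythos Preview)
Your approach is essentially the paper's, but you do by hand what the paper packages in one stroke. The paper first extracts from Proposition~\ref{Proposition expression of iterated integration of permutations} the clean identity
\[
\bigl((\phi^{1})_{a}\succ(\phi^{2})_{a}\bigr)(x)=(\phi^{1}\succ\phi^{2})_{a}(x)-\phi^{1}(a)\otimes(\phi^{2})_{a}(x),
\]
and uses it to write the full defect as a four--term sum,
\begin{align*}
\bigl(\beta(X_{t})-(\beta(X_{s}))_{X_{s,t}}\bigr)(x)
&=h_{s}\otimes\bigl(\beta_{2}(X_{t})-(\beta_{2}(X_{s}))_{X_{s,t}}\bigr)(x)\\
&\quad+\bigl(h_{t}-h_{s}-\beta_{1}(X_{s})(X_{s,t})\bigr)\otimes\beta_{2}(X_{t})(x)\\
&\quad+\beta_{1}(X_{s})(X_{s,t})\otimes\bigl(\beta_{2}(X_{t})-(\beta_{2}(X_{s}))_{X_{s,t}}\bigr)(x)\\
&\quad+\textstyle\prod_{\le[p]}\bigl(\beta_{1}(X_{t})\succ\beta_{2}(X_{t})-(\beta_{1}(X_{s}))_{X_{s,t}}\succ(\beta_{2}(X_{s}))_{X_{s,t}}\bigr)(x),
\end{align*}
each line directly estimable from the operator--norm hypothesis and Theorem~\ref{Proposition continuity of integral w.r.t. one-form}. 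Your ``pure-$x$'', ``one-form times value'' and ``mixed'' pieces are precisely what this identity sorts the reset of $B_{s}$ into; the cascade of cancellations you describe is exactly the verification that your expansion collapses to these four lines. What the paper's organization buys is that one never has to track the individual tails $\beta_{i}^{a+k}(X_{s})(X_{s,t}^{a}\otimes\cdot)$: they are already absorbed into the symbol $(\beta_{i}(X_{s}))_{X_{s,t}}$, and the fourth line is bounded by a single telescoping $a_{1}b_{1}-a_{2}b_{2}=(a_{1}-a_{2})b_{1}+a_{2}(b_{1}-b_{2})$.

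Your observation that the argument naturally produces $\theta=\min(\theta_{1},\theta_{2},([p]+1)/p)$ rather than $\min(\theta_{1},\theta_{2})$ is well taken: the displayed identity holds for the untruncated $\succ$, and replacing $(\phi^{1}\succ\phi^{2})_{a}$ by $(\prod_{\le[p]}(\phi^{1}\succ\phi^{2}))_{a}$ costs exactly the degree-$>[p]$ remnants you isolate, each carrying a factor $X_{s,t}^{m}$ with $m+k\ge[p]+1$. The paper's four--term formula suppresses this truncation remainder; your proposal makes it explicit.
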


\begin{remark}
For $t\in \left[ 0,1\right] $, set $\beta _{2}\left( X_{t}\right) \left(
x\right) :=x^{1}$ for $x\in G\left( V\right) ,x=\sum_{k\geq 0}x^{k},x^{k}\in
V^{\otimes k}$. Then the $\beta $ defined above corresponds to the
integration of $\beta _{1}$, and integration is a continuous operation on
slowly-varying one-forms.
\end{remark}

\begin{proof}
Based on Proposition \ref{Proposition expression of iterated integration of
permutations}, for $\phi ^{i}\in E_{X_{t}}^{U_{i}},i=1,2$ and $a\in G\left(
V\right) $,%
\begin{equation*}
\left( \left( \phi ^{1}\right) _{a}\succ \left( \phi ^{2}\right) _{a}\right)
\left( x\right) =\left( \phi ^{1}\succ \phi ^{2}\right) _{a}\left( x\right)
-\phi ^{1}\left( a\right) \otimes \left( \phi ^{2}\right) _{a}\left( x\right)
\end{equation*}%
for $x\in G\left( V\right) $.

Denote $X_{s,t}:=X_{s}^{-1}X_{t}$. For $s\leq t$,%
\begin{eqnarray*}
&&\left( \beta \left( X_{t}\right) -\left( \beta \left( X_{s}\right) \right)
_{X_{s,t}}\right) \left( x\right) \\
&=&\int_{r=0}^{s}\beta _{1}\left( X_{r}\right) dX_{r}\otimes \left( \beta
_{2}\left( X_{t}\right) -\left( \beta _{2}\left( X_{s}\right) \right)
_{X_{s,t}}\right) \left( x\right) \\
&&+\left( \int_{r=s}^{t}\beta _{1}\left( X_{r}\right) dX_{r}-\beta
_{1}\left( X_{s}\right) \left( X_{s,t}\right) \right) dX_{r}\otimes \beta
_{2}\left( X_{t}\right) \left( x\right) \\
&&+\beta _{1}\left( X_{s}\right) \left( X_{s,t}\right) \otimes \left( \beta
_{2}\left( X_{t}\right) -\left( \beta _{2}\left( X_{s}\right) \right)
_{X_{s,t}}\right) \left( x\right) \\
&&+\prod\nolimits_{\leq \left[ p\right] }\left( \left( \beta _{1}\left(
X_{t}\right) \succ \beta _{2}\left( X_{t}\right) \right) -\left( \left(
\beta _{1}\left( X_{s}\right) \right) _{X_{s,t}}\succ \left( \beta
_{2}\left( X_{s}\right) \right) _{X_{s,t}}\right) \right) \left( x\right)
\end{eqnarray*}%
for $x\in G\left( V\right) $. Then the estimate $\left( \ref{Iterated
Integration operator norm}\right) $ holds based on the definition of the
operator norm and Theorem \ref{Proposition continuity of integral w.r.t.
one-form}.
\end{proof}

\section*{Acknowledgement}

The author would like to express sincere gratitude to Prof. Terry Lyons for
numerous inspiring discussions that eventually lead to this paper. The
author also would like to thank Prof. Martin Hairer, Sina Nejad, Dr. Horatio
Boedihardjo, Dr. Xi Geng, Dr. Ilya Chevyrev and Vlad Margarint for
discussions and suggestions on (an earlier version of) the paper.

\bibliographystyle{abbrv}
\bibliography{acompat,roughpath}

\end{document}